\newtheorem{thm}{Theorem}[section]
\newtheorem{prop}[thm]{Proposition}
\newtheorem{lem}[thm]{Lemma}
\newtheorem*{main}{Main Theorem}
\newtheorem*{Q}{Question}
\theoremstyle{definition}
\newtheorem{defi}[thm]{Definition}
\newtheorem{rem}[thm]{Remark}
\newcommand{\g}{\mathfrak{g}}
\newcommand{\h}{\mathfrak{h}_3}
\newcommand{\m}{\mathfrak{m}}
\newcommand{\M}{\widetilde{\mathfrak{M}}}
\newcommand{\PM}{\mathfrak{PM}}
\newcommand{\A}{\mathbb{R}^{\times}\mathrm{Aut}(\g)}
\newcommand{\Aut}{\mathrm{Aut}(\g)}
\newcommand{\D}{\mathbb{R} \oplus {\rm Der}(\mathfrak{g})}
\newcommand{\Der}{\mathrm{Der}(\g)}
\newcommand{\naiseki}{\langle , \rangle}
\newcommand{\R}{\mathbb{R}}
\newcommand{\GL}{\mathrm{GL}_n(\R)} 
\numberwithin{equation}{section}
\author{Takahiro Hashinaga}
\address{Department of Mathematics, Hiroshima University, 
Higashi-Hiroshima 739-8526, Japan}
\email{hashinaga@hiroshima-u.ac.jp}
\author{Hiroshi Tamaru}
\address{Department of Mathematics, Hiroshima University, 
Higashi-Hiroshima 739-8526, Japan}
\email{tamaru@math.sci.hiroshima-u.ac.jp}
\keywords{Lie groups, 
left-invariant Riemannian metrics, 
solvsolitons, symmetric spaces, 
minimal submanifolds.} 
\thanks{2010 \textit{Mathematics Subject Classification}. 
53C30, 53C25.}
\thanks{The first author was supported in part by Grant-in-Aid for JSPS Fellows (11J05284). 
The second author was supported in part by KAKENHI (20740040, 24654012).} 
\date{}
\title[Solvsolitons and the corresponding submanifolds]
{Three-dimensional solvsolitons and the minimality of the corresponding submanifolds}
\begin{document}

\maketitle

\begin{abstract}
In this paper, we define 
the corresponding submanifolds to left-invariant Riemannian metrics on Lie groups,  
and study the following question: 
does a distinguished left-invariant Riemannian metric on a Lie group 
correspond to a distinguished submanifold? 
As a result, we prove that the solvsolitons on 
three-dimensional simply-connected solvable Lie groups 
are completely characterized by the minimality of the corresponding submanifolds. 

\end{abstract}

\section{Introduction}

\subsection{Solvsolitons}

Lie groups with left-invariant Riemannian metrics provide 
a lot of concrete examples of distinguished Riemannian metrics, 
such as Einstein metrics and Ricci solitons. 
Recently, such distinguished left-invariant Riemannian metrics 
have been studied very actively 
(see, for instance, 
\cite{F, Heber, J, J11, Lau01, Lau09, Lau10, Lau11, L-W, N06, N07, T05, T08, T11, W03, W11}). 

In this paper,
we treat solvsolitons as distinguished left-invariant Riemannian metrics.
Recall that a left-invariant Riemannian metric $\naiseki$ 
on a simply-connected solvable Lie group $G$ is called a \textit{solvsoliton} 
if the Ricci operator satisfies 
\begin{align}
\label{eq:solvsoliton}
\mathrm{Ric}_{\naiseki} =c I + D \quad 
(\mbox{for some $c \in \mathbb{R}$ and $D \in \Der$}) .
\end{align}
A solvsoliton on $G$ is called a \textit{nilsoliton} 
if $G$ is nilpotent. 
Solvsolitons have been introduced by Lauret (\cite{Lau11}), 
and play a key role in the study of homogeneous Ricci solitons. 
In particular, 
every solvsoliton on a simply-connected solvable Lie group is a Ricci soliton 
(\cite{Lau11}), 
and every left-invariant Ricci soliton on a solvable Lie group 
is isometric to a solvsoliton (\cite{J11}). 

In the study of solvsolitons, 
including left-invariant Einstein metrics on solvable Lie groups, 
the tools from geometric invariant theory have played very important roles. 
Among others, 
Lauret (\cite{Lau11})
obtained structural and uniqueness results for solvsolitons. 
It enables to classify 
solvsolitons in low-dimensional cases (\cite{Lau11, W11}). 
For further information, we refer to \cite{Lau09} and references therein. 
%the references therein. 
%どうも the は付けないっぽい。理由は知らないけど。

\subsection{An approach from the submanifold theory}

In this paper, we propose a new framework 
for studying distinguished left-invariant Riemannian metrics, 
such as solvsolitons, 
in terms of the group actions on and the submanifold theory 
in noncompact symmetric spaces. 
This paper only concerns simply-connected solvable Lie groups 
of dimension three, 
but here we formulate our framework in a general way. 

Let $G$ be a Lie group and $\g$ be the Lie algebra of $G$. 
Consider the set of all left-invariant Riemannian metrics on $G$, 
which can be identified with 
\begin{align}
\M := \{ \naiseki \mid 
\mbox{an inner product on $\g$} 
\} 
\cong \mathrm{GL}_n(\mathbb{R})/\mathrm{O}(n) , 
\end{align} 
where $n = \dim G$. 
Throughout this paper, this space is assumed to be endowed with 
the natural $\mathrm{GL}_n(\mathbb{R})$-invariant Riemannian metric 
(see Subsection~2.1), 
and hence is a noncompact symmetric space. 
Let us consider the actions of 
\begin{align}
\A := \{ 
c \varphi \in \mathrm{GL}_n(\mathbb{R}) \mid 
c \in \mathbb{R}^\times , \ \varphi \in \mathrm{Aut}(\g) 
\} 
\end{align} 
on 
$\M = \mathrm{GL}_n(\mathbb{R})/\mathrm{O}(n)$. 
Note that $\mathbb{R}^\times$ denotes the set of nonzero scalar maps on $\g$, 
and $\mathrm{Aut}(\g)$ the automorphism group. 
The group $\A$ comes from 
the equivalence relation ``isometry up to scaling" in the Lie algebra level 
(see Definition \ref{I}). 
Denote its equivalence class by $[~]$. 
Then, for each inner product $\naiseki$, 
it follows from \cite{KTT} that 
\begin{align}
[\naiseki] = \A . \naiseki , 
\end{align} 
which we call 
\textit{the corresponding submanifold} to $\naiseki$. 
An important point is that the Riemannian geometric properties of $\naiseki$ 
are preserved by isometry and scaling. 
Thus we can regard properties of left-invariant Riemannian metrics 
as properties of the corresponding submanifolds. 
Therefore, it would be natural to ask the following: 
\begin{Q}
Does a distinguished left-invariant Riemannian metric correspond to 
a distinguished submanifold? 
\end{Q}

If an answer for this question is positive, 
then the approach from the corresponding submanifolds would possibly be useful for the study of left-invariant metrics. 
For example, 
the existence and nonexistence problem 
of distinguished left-invariant Riemannian metrics on $G$ 
can be translated to the problem of the $\A$-action, that is, 
the existence and nonexistence of distinguished orbits. 

\subsection{Results of this paper}

Let $G$ be a three-dimensional simply-connected solvable Lie group 
with Lie algebra $\g$. 
In this paper, 
we present that there is a good relationship between the existence 
of solvsolitons on $G$ and geometric 
aspects of the corresponding action of $\A$ on $\mathrm{GL}_3(\mathbb{R})/\mathrm{O}(3)$. 
We will see this by using 
the classification of three-dimensional solvable Lie algebras (\cite{ABDO}), 
which is summarized in Table~\ref{table:three-dimensional}. 
Note that Table~\ref{table:three-dimensional} 
contains a decomposable one, $\mathfrak{r}_{3, 0}$, 
and our results are true for both decomposable and indecomposable cases. 

\begin{table}[ht]
\begin{tabular}{|c|l|c|} \hline
Name & Non-zero commutation relation & \\ \hline
$\mathfrak{h}_3$ & $[e_1,e_2]=e_3$ & Nilpotent\\
$\mathfrak{r}_3$ & $[e_1,e_2]=e_2+e_3,~[e_1,e_3]=e_3$ & Solvable\\
$\mathfrak{r}_{3, a}$ & $[e_1,e_2]=e_2,~[e_1,e_3]=ae_3$ \hspace{3mm} $(-1 \leq a \leq 1)$ & Solvable \\
$\mathfrak{r}^{\prime}_{3, a}$ & $[e_1,e_2]=ae_2 - e_3$, $[e_1,e_3]=e_2 + ae_3$ \hspace{3mm} $(a \geq 0)$ & Solvable \\ \hline
\end{tabular}  
\label{table:three-dimensional}
\caption{Three-dimensional solvable Lie algebras} 
\end{table} 

We recall that, 
for an isometric action on a Riemannian manifold, 
orbits of maximal dimension are said to be 
\textit{regular}, 
and other orbits 
\textit{singular}. 
An action is said to be of 
\textit{cohomogeneity one} if the regular orbits have codimension one. 
Then, the good relationship we obtain 
can be summarized as follows. 

\begin{itemize}
\item
Let $\g = \mathfrak{h}_3$ or $\mathfrak{r}_{3,1}$. 
Then, $\A$ acts transitively on $\M$, 
and hence 
there is the only one orbit.  
The left-invariant Riemannian metric on $G$ is 
unique up to isometry and scaling, 
and the metric is a solvsoliton 
(nilsoliton for $\mathfrak{h}_3$, and Einstein for $\mathfrak{r}_{3,1}$). 

\item 
Let $\g = \mathfrak{r}_3$. 
Then, the action of $\A$ is of cohomogeneity one, 
and all orbits are regular. 
Furthermore, all orbits are isometrically congruent to each other 
(namely there are no distinguished orbits). 
On the other hand, $G$ does not admit a solvsoliton. 

\item 
Let $\g=\mathfrak{r}_{3, a}$ $(-1 \leq a < 1)$. 
Then, the action of $\A$ is of cohomogeneity one, 
and all orbits are regular. 
This action has the unique minimal orbit. 
On the other hand, 
$G$ admits a solvsoliton, 
whose corresponding submanifold coincides with this minimal orbit. 

\item 
Let $\g = \mathfrak{r}^{\prime}_{3,a}$ ($a \geq 0$). 
Then, the action of $\A$ is of cohomogeneity one, 
and has the unique singular orbit. 
On the other hand, $G$ admits a left-invariant Einstein metric, 
whose corresponding submanifold coincides with this singular orbit. 
\end{itemize}

By studying the geometry of $\A$-orbits in more detail, 
we obtain a positive answer to the above mentioned 
Question for three-dimensional solvsolitons.
Namely, three-dimensional solvsolitons 
can completely be characterized by 
the minimality of the corresponding submanifold. 

\begin{main} 
\label{thm:main} 
Let $G$ be a three-dimensional simply-connected solvable Lie group, 
and $\naiseki$ be a left-invariant Riemannian metric on $G$. 
Then, $\naiseki$ is a solvsoliton if and only if 
the corresponding submanifold $[\naiseki]$ is a minimal submanifold 
in $\M$ with respect to the natural $\mathrm{GL}_3 (\mathbb{R})$-invariant Riemannian metric. 
\end{main}

This paper is organized as follows. 
In Section~2, 
we recall the necessary background on 
the corresponding submanifolds $[\naiseki]$ 
to left-invariant Riemannian metrics $\naiseki$ on Lie groups. 
In Section~3, 
for each three-dimensional solvable Lie algebra $\g$, we study the orbit space of the action of $\A$. 
Expressions of the orbit spaces will be used in both Sections~4 and 5. 
In Section~4, 
we study three-dimensional solvsolitons. 
In particular, we obtain the ``Milnor-type theorems'' for each $\g$, 
and apply them to the reclassification of three-dimensional solvsolitons. 
In Section~5, 
we study the actions of $\A$. 
The results of Sections~4 and 5 provide the proof of our Main Theorem. 

\section{The corresponding submanifolds} 

In this section, we define the notion of the corresponding submanifolds 
to left-invariant Riemannian metrics on Lie groups. 
This gives a correspondence between left-invariant Riemannian metrics and 
$\A$-homogeneous submanifolds. 

\subsection{The space of left-invariant metrics}

First of all, we recall the space of left-invariant Riemannian metrics, 
which will be the ambient space of the corresponding submanifolds. 
We refer to \cite{KTT}. 

Let $G$ be an $n$-dimensional simply-connected Lie group, 
and $\g$ be the Lie algebra of $G$. 
We consider the set of all left-invariant Riemannian metrics on $G$, 
which can naturally be identified with 
\begin{align}
\M := \{ \naiseki \mid \mbox{an inner product on $\g$} \} .  
\end{align}
We identify $\g$ with $\mathbb{R}^n$ as vector spaces from now on. 
Then, since $\mathrm{GL}_n(\mathbb{R})$ acts transitively on $\M$ by
\begin{align}
g.\langle \cdot , \cdot \rangle := \langle g^{-1} (\cdot) , g^{-1} (\cdot) \rangle 
\quad 
(\mbox{for} \  g \in \mathrm{GL}_n(\mathbb{R}), \ \naiseki \in \M ), 
\end{align}
we have an identification
\begin{align}
\M = \mathrm{GL}_n(\mathbb{R})/\mathrm{O}(n).
\end{align} 

Note that $\M$ equipped with the natural 
$\mathrm{GL}_n(\mathbb{R})$-invariant Riemannian metric 
is a noncompact Riemannian symmetric space. 
In order to describe this natural metric, 
we recall a general theory of reductive homogeneous spaces. 
Let $U/K$ be a reductive homogeneous space, 
that is, 
there exists an $\mathrm{Ad}_K$-invariant subspace $\m$ of $\mathfrak{u}$ satisfying 
\begin{align}
\label{eq:reductive}
\mathfrak{u} = \mathfrak{k} \oplus \mathfrak{m} . 
\end{align}
Note that $\mathfrak{u}$ and $\mathfrak{k}$ are the Lie algebras of $U$ and $K$, 
respectively, 
and $\oplus$ is the direct sum as vector spaces. 
The decomposition (\ref{eq:reductive}) is called a 
\textit{reductive decomposition}. 
Denote by $\pi : U \rightarrow U/K$ the natural projection, 
and by $o := \pi(e)$ the origin of $U/K$. 
We identify $\m$ with the tangent space $\mathrm{T}_o (U/K)$ at $o$ by 
\begin{align}
\mathrm{d} \pi_e |_{\m} : \m \rightarrow \mathrm{T}_o (U/K) . 
\end{align}
This identification induces 
a one-to-one correspondence between the set of 
$U$-invariant Riemannian metrics on $U/K$ 
and the set of $\mathrm{Ad}_K$-invariant inner products on $\m$. 

Now one can see that 
$\M = \mathrm{GL}_n(\mathbb{R})/\mathrm{O}(n)$ 
is a reductive homogeneous space, 
whose reductive decomposition is given by the subspace 
\begin{align}
\mathrm{sym}(n) := \{ X \in \mathfrak{gl}_n(\mathbb{R}) \mid X = {}^t X  \} . 
\end{align}
We define the $\mathrm{Ad}_{\mathrm{O}(n)}$-inner product on $\mathrm{sym}(n)$ by 
\begin{align}\label{met}
\langle X , Y \rangle := \mathrm{tr}(XY) 
\quad 
(\mbox{for} \  X,Y \in \mathrm{sym}(n)).
\end{align}
We call the 
$\mathrm{GL}_n(\mathbb{R})$-invariant Riemannian metric corresponding
to the above $\mathrm{Ad}_{\mathrm{O}(n)}$-inner product the \textit{natural Riemannian metric}. 

\subsection{The corresponding submanifolds} 

We now define the submanifolds 
in the space of left-invariant Riemannian metrics, 
and see that they are homogeneous. 
These submanifolds come from the equivalence relation 
``isometric up to scaling".  

\begin{defi}\label{I}
Two inner products $\naiseki_1$ and $\naiseki_2$ on $\g$ 
are said to be \textit{isometric up to scaling} 
if there exist $k > 0$ and an automorphism 
$f: \g \rightarrow \g$ 
such that $\langle \cdot , \cdot \rangle_1 = k \langle f (\cdot) , f (\cdot) \rangle_2$.  
\end{defi}

Assume that inner products $\naiseki_1$ and $\naiseki_2$ 
on $\g$ are isometric up to scaling. 
Then, the corresponding left-invariant Riemannian metrics on $G$, 
the simply-connected Lie group with Lie algebra $\g$, 
are isometric up to scaling as Riemannian metrics 
(we refer to \cite[Remark 2.3]{KTT}). 
Therefore, this equivalence relation preserves 
all Riemannian geometric properties of left-invariant metrics. 
In particular, it preserves solvsolitons. 

\begin{defi}
For each inner product $\naiseki$ on $\g$, 
we call its equivalence class 
$[\naiseki]$ the \textit{corresponding submanifold to} $\naiseki$. 
\end{defi}

Note that $[\naiseki]$ is a submanifold in 
$\M = \mathrm{GL}_n(\mathbb{R})/\mathrm{O}(n)$. 
We here recall that $[\naiseki]$ is a homogeneous submanifold.
Let us denote by 
\begin{align}
\mathbb{R}^{\times} & := \{ c \cdot \mathrm{id} : \g \rightarrow \g \mid 
c \in \mathbb{R} \setminus \{ 0 \} \} , \\ 
\Aut & := \{ \varphi : \g \rightarrow \g \mid \mbox{an automorphism} \} . 
\end{align}
Then, the subgroup $\A$ 
of $\mathrm{GL}_n(\mathbb{R})$ acts naturally on $\M$. 
Let us denote by $\A.\naiseki$ the $\A$-orbit through $\naiseki$. 

\begin{prop}
[{\cite[Theorem 2.5]{KTT}}] 
Let $\naiseki$ be an inner product on $\g$. 
Then, the corresponding submanifold 
$[ \naiseki ]$ is a homogeneous submanifold with respect to $\A$, that is, 
\begin{align}
[\naiseki] = \A.\naiseki. 
\end{align} 
\end{prop}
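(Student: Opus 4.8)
The plan is to verify the claimed equality of two explicitly parametrized subsets of $\M$: the equivalence class $[\naiseki]$ coming from Definition~\ref{I}, and the orbit $\A.\naiseki$. Since both are described by a positive scaling parameter together with an automorphism, the proof should reduce to matching these parameters, and I expect no serious obstacle beyond careful bookkeeping of inverses and squares.

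First I would compute the action of a general element $g = c\varphi \in \A$, with $c \in \R^\times$ and $\varphi \in \Aut$, on the inner product $\naiseki$. Using the defining $\GL$-action $g.\naiseki = \langle g^{-1}(\cdot), g^{-1}(\cdot) \rangle$ together with the fact that the scalar $c \cdot \mathrm{id}$ commutes with $\varphi$ and factors out of each slot, I obtain
\begin{align}
g.\naiseki = \langle c^{-1}\varphi^{-1}(\cdot),\, c^{-1}\varphi^{-1}(\cdot) \rangle = c^{-2} \langle \varphi^{-1}(\cdot),\, \varphi^{-1}(\cdot) \rangle .
\end{align}
Hence $\A.\naiseki = \{ c^{-2}\langle \varphi^{-1}(\cdot), \varphi^{-1}(\cdot) \rangle \mid c \in \R^\times,\ \varphi \in \Aut \}$.

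Next I would unwind Definition~\ref{I}: an inner product lies in $[\naiseki]$ precisely when it is isometric up to scaling to $\naiseki$, that is, when it has the form $k \langle f(\cdot), f(\cdot) \rangle$ for some $k > 0$ and some $f \in \Aut$. Thus $[\naiseki] = \{ k\langle f(\cdot), f(\cdot) \rangle \mid k > 0,\ f \in \Aut \}$, which is the second set written in the same shape as the orbit above.

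Finally, I would match the two descriptions via the reparametrization $k = c^{-2}$ and $f = \varphi^{-1}$. The map $c \mapsto c^{-2}$ sends $\R^\times$ onto the positive reals, and $\varphi \mapsto \varphi^{-1}$ is a bijection of $\Aut$; so the two parameter ranges agree and the two sets of inner products coincide, giving $[\naiseki] = \A.\naiseki$. The only point requiring slight care is confirming that the positive scalar $k$ in the equivalence relation is \emph{exactly} realized by the factors $c^{-2}$: since squares of nonzero reals sweep out all of the positive reals (and since only positive scalings of an inner product are again inner products, so $k>0$ is precisely the needed range, with no negative or zero factor ever arising), surjectivity holds and there is in fact no real obstacle.
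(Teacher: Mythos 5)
Your proof is correct: computing $g.\naiseki = c^{-2}\langle \varphi^{-1}(\cdot),\varphi^{-1}(\cdot)\rangle$ for $g = c\varphi \in \A$ and matching parameters via $k = c^{-2}$, $f = \varphi^{-1}$ (with $c \mapsto c^{-2}$ surjecting onto the positive reals and $\varphi \mapsto \varphi^{-1}$ a bijection of $\Aut$) establishes the set equality $[\naiseki] = \A.\naiseki$ exactly. The paper itself gives no proof, citing the statement as Theorem 2.5 of \cite{KTT}; your direct unwinding of Definition~\ref{I} and of the $\mathrm{GL}_n(\mathbb{R})$-action is precisely the standard verification underlying that reference, so there is nothing to add.
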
\label{orbit}

\section{Explicit expressions of the moduli spaces}

In this section, for each three-dimensional solvable Lie algebra $\g$, 
we give an explicit expression of 
the ``moduli space'' of left-invariant Riemannian metrics. 
The results of this section will be used in Sections~4 and 5. 

\subsection{Preliminaries on the moduli spaces}

In this subsection, 
we recall some necessary facts on the moduli spaces 
of left-invariant Riemannian metrics. 
We refer to \cite{KTT}. 

\begin{defi}\label{DefPM}
For a Lie algebra $\g$, 
the quotient space of $\M$ by ``isometric up to scaling" is called the 
\textit{moduli space of left-invariant Riemannian metrics}, 
and denoted by 
\begin{align}
\PM := \{ [\naiseki] \mid \naiseki \in \M \} . 
\end{align}
\end{defi}

In order to determine $\PM$ explicitly, 
we will use the following notion of a set of representatives. 
Recall that we identify $\g \cong \R^n$. 
Denote by $\{ e_1, \ldots, e_n \}$ the canonical basis of $\R^n$, 
and by $\naiseki_0$ the inner product so that the canonical basis is orthonormal.

\begin{defi}
A subset $U \subset \GL$ is called a 
\textit{set of representatives} of $\PM$ if it satisfies 
\begin{align}
\PM = \{ [h. \naiseki_0] \mid h \in U \} . 
\end{align}
\end{defi}

In the later arguments, it is convenient to use the double cosets. 
Note that our double coset $[[g]]$ of $g \in \mathrm{GL}_n(\mathbb{R})$ 
is defined by 
\begin{align} 
[[ g ]] := \mathbb{R}^{\times} \Aut \cdot g \cdot \mathrm{O}(n) . 
\end{align} 

\begin{lem}[\cite{HTT}]
\label{lem:representatives}
Let $U \subset \GL$. 
Then, 
$U$ is a set of representatives of $\PM$ if and only if, 
for every $g \in \GL$, 
there exists $h \in U$ such that $h \in [[g]]$. 
\end{lem}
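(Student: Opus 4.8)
The plan is to translate the notion of a ``set of representatives'', which is phrased in terms of the orbit space $\PM$, into the language of double cosets, and then to observe that both sides of the claimed equivalence merely say that $U$ meets every double coset.

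First I would make precise the dictionary between $\PM$ and the double coset space $\A \backslash \GL / \OO$. Since $\OO$ is exactly the stabilizer of $\naiseki_0$ under the $\GL$-action, the assignment $g\OO \mapsto g.\naiseki_0$ is a $\GL$-equivariant bijection realizing the identification $\M = \GL/\OO$. Under this identification the $\A$-action on $\M$ becomes left multiplication by $\A$ on cosets, so the $\A$-orbit through $g.\naiseki_0$ corresponds precisely to the set of cosets contained in $\A \cdot g \cdot \OO = [[g]]$. Combining this with the Proposition, which gives $[\naiseki] = \A.\naiseki$, I obtain the key fact: for $g, g' \in \GL$,
\begin{align}
[g.\naiseki_0] = [g'.\naiseki_0] \iff g' \in [[g]] .
\end{align}
Equivalently, $[g.\naiseki_0] \mapsto [[g]]$ is a well-defined bijection $\PM \to \A \backslash \GL / \OO$; here I use that the double cosets $[[g]]$ partition $\GL$, so that $g' \in [[g]]$ is the same as the equality $[[g']] = [[g]]$.

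With this dictionary in hand, both implications become formal. For the forward direction, assume $U$ is a set of representatives and fix $g \in \GL$. Then $[g.\naiseki_0] \in \PM = \{ [h.\naiseki_0] \mid h \in U \}$, so there is $h \in U$ with $[h.\naiseki_0] = [g.\naiseki_0]$, and the key fact yields $h \in [[g]]$, as required. For the converse, suppose $U$ meets every double coset. Since $\GL$ acts transitively on $\M$, every element of $\PM$ has the form $[g.\naiseki_0]$ for some $g \in \GL$; choosing $h \in U$ with $h \in [[g]]$ and applying the key fact gives $[h.\naiseki_0] = [g.\naiseki_0]$. Hence every class in $\PM$ lies in $\{ [h.\naiseki_0] \mid h \in U \}$, while the reverse inclusion is automatic, so $U$ is a set of representatives.

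The only genuine content lies in establishing the key fact, and in particular in keeping the left/right conventions straight: one must verify that the left $\A$-action on $\M = \GL/\OO$ sends $g.\naiseki_0$ exactly to the cosets inside $\A \cdot g \cdot \OO$, and that $h \in [[g]]$ is truly equivalent to equality of double cosets. Once this orbit-to-double-coset correspondence is checked, everything else is routine set-theoretic bookkeeping, so I expect this verification — rather than either implication — to be where the care is needed.
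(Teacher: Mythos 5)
Your proposal is correct: the orbit--double-coset dictionary $[g.\naiseki_0] = [g'.\naiseki_0] \iff g' \in [[g]]$, obtained by combining the identification $\M = \GL/\OO$ (with $\OO$ the stabilizer of $\naiseki_0$) with $[\naiseki] = \A.\naiseki$, is exactly what is needed, and both implications then follow formally as you say. Note that the paper itself gives no proof of this lemma, quoting it from \cite{HTT}; your argument is precisely the standard one underlying that citation, so there is nothing to flag beyond observing that your verification of the key fact (orbits of the left $\A$-action on $\GL/\OO$ sweep out exactly the double cosets $\A \cdot g \cdot \OO$) is sound and complete.
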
 

In order to obtain a set of representatives of $\PM$, one needs $\A$. 
The Lie algebra of $\A$ coincides with $\D$, where 
\begin{align}
\mathbb{R} 
& := \{ c \cdot \mathrm{id} : \g \rightarrow \g \mid c \in \mathbb{R} \} , \\ 
\Der 
& := \{ D \in \mathfrak{gl}(\g) \mid 
D[\cdot , \cdot ]=[D (\cdot ),\cdot ]+[\cdot , D (\cdot )] \} . 
\end{align}
The Lie algebra $\D$ determines $(\A)^0$, 
the connected component of $\A$ containing the identity. 

For each three-dimensional solvable Lie algebra, 
the moduli space $\PM$ has been studied in \cite{KTT}. 
We here mention the trivial case, 
which means that $\PM$ consists of one point. 

\begin{prop}[\cite{KTT, Lau03}]\label{tri}
Let $\g = \mathfrak{h}_3$ or $\mathfrak{r}_{3, 1}$. 
Then, $\A$ acts transitively on $\M$, and hence $\PM = \{ \mathrm{pt} \}$. 
\end{prop}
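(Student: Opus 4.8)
The plan is to establish the transitivity of $\A$ on $\M$ directly at the group level, after which $\PM=\{\mathrm{pt}\}$ is immediate. Since $\M=\mathrm{GL}_3(\R)/\mathrm{O}(3)$ and $\A$ acts by left translations, transitivity is equivalent to the factorization $\mathrm{GL}_3(\R)=\A\cdot\mathrm{O}(3)$, so I would look for a transitive subgroup of $\mathrm{GL}_3(\R)$ sitting inside $\A$. The natural choice is the group of invertible lower-triangular matrices: identifying $\M$ with the set of positive-definite symmetric matrices, on which $\mathrm{GL}_3(\R)$ acts by congruence $S\mapsto hS\,{}^th$, the Cholesky factorization $S=L\,{}^tL$ with $L$ lower-triangular shows that this subgroup already acts transitively (the orbit of the identity is all of the positive-definite symmetric matrices). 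The whole statement therefore reduces to checking that, for $\g=\h$ and $\g=\mathfrak{r}_{3,1}$, the group $\A$ contains every invertible lower-triangular matrix.

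First I would compute $\Aut$ explicitly. For $\h$, every automorphism preserves the center $[\g,\g]=\R e_3$, hence has block-lower-triangular form $\left(\begin{smallmatrix} A & 0\\ v & \mu\end{smallmatrix}\right)$ with $A\in\mathrm{GL}_2(\R)$ and $v\in\R^2$; the relation $[e_1,e_2]=e_3$ then forces $\mu=\det A$. For $\mathfrak{r}_{3,1}$, every automorphism preserves the derived algebra $[\g,\g]=\langle e_2,e_3\rangle$ and acts on $\g/[\g,\g]=\R e_1$; since $\mathrm{ad}_{e_1}$ is the identity on $\langle e_2,e_3\rangle$, comparing brackets forces the scalar on $e_1$ to equal $1$, giving the block-lower-triangular form $\left(\begin{smallmatrix} 1 & 0\\ w & B\end{smallmatrix}\right)$ with $B\in\mathrm{GL}_2(\R)$ and $w\in\R^2$. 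In both cases $\Aut$ is block-lower-triangular but carries one rigid diagonal constraint ($\mu=\det A$, respectively the leading entry $1$).

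The key point, which I expect to be the main obstacle, is that multiplying by the scalars $\R^{\times}$ removes precisely that constraint and enlarges $\Aut$ to the full block-triangular group. For $\h$ I would write a general element of $\A$ as $\left(\begin{smallmatrix} cA & 0\\ cv & c\det A\end{smallmatrix}\right)$ and set $B:=cA$, $u:=cv$, $t:=c\det A$; given any $B\in\mathrm{GL}_2(\R)$, $u\in\R^2$ and $t\in\R^{\times}$, the equations are solved by $c=\det B/t$, so $(B,u,t)$ is arbitrary and $\A$ is exactly the group of block-lower-triangular matrices with block sizes $(2,1)$. The analogous, easier bookkeeping for $\mathfrak{r}_{3,1}$ shows that there $\A$ is the block-lower-triangular group with block sizes $(1,2)$. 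Either group plainly contains all invertible lower-triangular matrices.

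Combining these steps, $\A$ contains the full lower-triangular subgroup in both cases, so by the Cholesky factorization above $\A$ acts transitively on $\M$, i.e.\ $\mathrm{GL}_3(\R)=\A\cdot\mathrm{O}(3)$. Transitivity means $\M$ is a single $\A$-orbit, equivalently a single class under \emph{isometry up to scaling}, so $\PM=\{[\naiseki_0]\}=\{\mathrm{pt}\}$. An infinitesimal alternative would be to show the orbit through $\naiseki_0$ is open by verifying that the symmetric parts of $\D$ span $\mathrm{sym}(3)$; but this only produces an open orbit, and promoting \emph{open} to \emph{transitive} requires an extra connectedness argument, so I would prefer the global factorization.
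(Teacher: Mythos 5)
Your proof is correct, and it is worth noting that the paper gives no argument of its own for this proposition: it is imported wholesale from the references \cite{KTT, Lau03}. Your proposal therefore supplies a self-contained proof where the paper defers to the literature. Both of your automorphism computations are right: for $\h$ every automorphism has the form $\left(\begin{smallmatrix} A & 0\\ v & \det A\end{smallmatrix}\right)$ with $A\in\mathrm{GL}_2(\R)$, and for $\mathfrak{r}_{3,1}$ the form $\left(\begin{smallmatrix} 1 & 0\\ w & B\end{smallmatrix}\right)$ with $B\in\mathrm{GL}_2(\R)$; and your key observation — that the scalar factor $\R^{\times}$ absorbs exactly the one rigid constraint (solve $c=\det B/t$, resp.\ $c=t$), so that $\A$ is the \emph{full} block-lower-triangular group of type $(2,1)$, resp.\ $(1,2)$ — is the heart of the matter and holds up. One small convention to watch: under the paper's action $g.S={}^t g^{-1} S g^{-1}$ on positive-definite matrices, transitivity of the lower-triangular group $L$ requires writing $S^{-1}=h\,{}^t h$ with $h\in L$, i.e.\ Cholesky applied to $S^{-1}$ rather than to $S$; this is harmless since inversion permutes the positive-definite matrices, but the factorization should be stated for the right matrix. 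It is also worth seeing that your mechanism is the same one the paper deploys for the nontrivial algebras: the proof of Lemma~\ref{F} begins by choosing $k\in\mathrm{O}(3)$ so that $gk$ is lower triangular (the group-level QR step, equivalent to your Cholesky factorization) and then absorbs entries using elements of $\A$; for $\h$ and $\mathfrak{r}_{3,1}$ your computation shows $\A$ is large enough to absorb \emph{everything}, which is precisely why $\PM$ collapses to a point, whereas for the other algebras a one-parameter family survives. Finally, your closing caution is apt: the infinitesimal criterion (symmetric parts of $\D$ spanning $\mathrm{sym}(3)$) only gives open orbits, and one would need the standard open-closed argument on the connected space $\M$ to upgrade this to transitivity, so the global factorization is indeed the cleaner route.
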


\begin{rem}\label{rem-trivial case}
One can see that Theorem~\ref{thm:main} holds 
for $\g = \mathfrak{h}_3$ and $\mathfrak{r}_{3, 1}$. 
In fact, 
it is well-known that any left-invariant Riemannian metrics $\naiseki$ on 
these Lie algebras are solvsolitons 
(nilsoliton for $\mathfrak{h}_3$, and Einstein for $\mathfrak{r}_{3,1}$). 
Furthermore, for every $\naiseki$, 
the corresponding submanifold $[\naiseki]$ coincides with the ambient space $\M$, 
which is minimal. 
\end{rem}

In the following, 
we will study the remaining three-dimensional solvable Lie algebras. 

\subsection{A lemma for nontrivial cases} 

This subsection gives a preliminary to obtain a set of representatives $U$ of $\PM$ for 
$\mathfrak{g} = \mathfrak{r}_3$, 
$\mathfrak{r}_{3, a}$ ($-1 \leq a <1$), and 
$\mathfrak{r}^{\prime}_{3, a}$ ($a \geq 0$). 

First of all, 
let us recall a matrix expression of $\Der$ for these Lie algebras. 
The following results can be calculated directly, 
and be found in \cite[Section~4]{KTT}. 

\begin{lem}[\cite{KTT}]
\label{der}
The matrix expressions of $\Der$ with respect to the bases $\{ e_1, e_2, e_3 \}$ 
in Table~\ref{table:three-dimensional} are given as follows$:$ 
\begin{enumerate}
\item Let $\g=\mathfrak{r}_{3}$. Then, we have
\begin{align*}
\displaystyle \Der = \left\{ \left(
\begin{array}{ccc}
0 & 0 & 0 \\
x_{21} & x_{22} & 0 \\
x_{31} & x_{32} & x_{22} 
\end{array}
\right) 
\mid x_{21}, x_{22}, x_{31}, x_{32} \in \R 
\right\} . 
\end{align*}
\item 
Let 
$\g=\mathfrak{r}_{3, a}$ $(-1 \leq a < 1)$.
Then, we have
\begin{align*}
\displaystyle \Der = \left\{ \left(
\begin{array}{ccc}
0 & 0 & 0 \\
x_{21} & x_{22} & 0 \\
x_{31} & 0 & x_{33} 
\end{array}
\right) \mid x_{21}, x_{22}, x_{31}, x_{33} \in \R \right\} . 
\end{align*}
\item 
Let 
$\g = \mathfrak{r}^{\prime}_{3, a}$ $(a \geq 0)$. 
Then, we have
\begin{align*}
\displaystyle \Der = \left\{ \left(
\begin{array}{ccc}
0 & 0 & 0 \\
x_{21} & x_{22} & -x_{23} \\
x_{31} & x_{23} & x_{22} 
\end{array}
\right) 
\mid x_{21}, x_{22}, x_{23}, x_{31} \in \R 
\right\} . 
\end{align*}
\end{enumerate}
\end{lem}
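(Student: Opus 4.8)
The plan is to impose the defining Leibniz identity of a derivation directly and to solve the resulting linear system. I would write a candidate $D \in \mathfrak{gl}(\g)$ by its matrix $(x_{ij})$ in the basis $\{e_1,e_2,e_3\}$, so that $De_j = \sum_i x_{ij} e_i$. Since the bracket is bilinear and alternating, $D$ lies in $\Der$ if and only if
\begin{align}
D[e_i,e_j] = [De_i,e_j] + [e_i,De_j]
\end{align}
holds for the three pairs $(i,j) \in \{(1,2),(1,3),(2,3)\}$. For each of the three Lie algebras I would substitute the commutation relations of Table~\ref{table:three-dimensional}, expand both sides, and equate the coefficients of $e_1,e_2,e_3$. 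This produces a homogeneous linear system in the nine unknowns $x_{ij}$, with the parameter $a$ entering as a coefficient for $\mathfrak{r}_{3,a}$ and $\mathfrak{r}'_{3,a}$; solving it yields exactly the stated matrices.

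It is cleaner to organize the computation around the structure common to the three algebras. In each case $V := \mathrm{span}\{e_2,e_3\}$ is an abelian ideal, since $[e_2,e_3]=0$ throughout, and $\mathrm{ad}_{e_1}|_V$ is the $2\times 2$ matrix $A$ equal to the Jordan block $\left(\begin{smallmatrix} 1 & 0 \\ 1 & 1 \end{smallmatrix}\right)$ for $\mathfrak{r}_3$, the diagonal matrix $\mathrm{diag}(1,a)$ for $\mathfrak{r}_{3,a}$, and $\left(\begin{smallmatrix} a & 1 \\ -1 & a \end{smallmatrix}\right)$ for $\mathfrak{r}'_{3,a}$. Writing $D$ in block form relative to $\g = \R e_1 \oplus V$, with $B$ the lower-right $2\times 2$ block of $(x_{ij})$ and $\beta := (x_{12},x_{13})$ the pair of off-diagonal entries in its first row, the identities from the pairs $(1,2)$ and $(1,3)$ amount to $\beta A = 0$ together with $BA - AB = x_{11}A$, while the identity from $(2,3)$ constrains $\beta$ once more. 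Taking the trace of $BA - AB = x_{11}A$ forces $x_{11}=0$, so that $B$ commutes with $A$; reading off the commutant of each $A$ then gives the three displayed lower blocks — for the rotation-type $A$ of $\mathfrak{r}'_{3,a}$ the commutant is exactly $\left\{\left(\begin{smallmatrix} x_{22} & -x_{23} \\ x_{23} & x_{22} \end{smallmatrix}\right)\right\}$, which is the block asserted there.

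The computation itself is routine, so the only real obstacle is keeping the conclusion uniform in the parameter $a$, which is where the ranges prescribed in Table~\ref{table:three-dimensional} enter. The shape of the commutant of $A$ depends on its eigenvalue type: the range $-1 \le a < 1$ keeps the eigenvalues $1$ and $a$ of $\mathrm{diag}(1,a)$ distinct (the excluded value $a=1$ is precisely the transitive case $\mathfrak{r}_{3,1}$), so the commutant is the diagonal matrices, while the single Jordan block and the complex-conjugate pair $a \pm i$ give the other two block shapes. I would also treat separately the degenerate values at which the naive commutant argument needs supplementing: when $\mathrm{tr}\,A = 0$ (namely $a=-1$ for $\mathfrak{r}_{3,a}$ and $a=0$ for $\mathfrak{r}'_{3,a}$) the trace argument for $x_{11}=0$ must be replaced by direct inspection of the diagonal of $BA-AB = x_{11}A$, and when $A$ is singular (namely $a=0$ for $\mathfrak{r}_{3,a}$) the vanishing of $x_{13}$ no longer follows from $\beta A = 0$ and must instead be extracted from the $(2,3)$ identity. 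Once these finitely many special values are checked, the solution spaces coincide with the matrices in the statement.
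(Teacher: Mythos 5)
Your proof is correct. The paper itself offers no argument for this lemma: it merely records that the matrices ``can be calculated directly'' and cites \cite{KTT}, so your write-up supplies exactly the computation being outsourced, and does so in a cleaner way than a brute-force solve of the nine-unknown linear system. The decomposition $\g = \R e_1 \oplus V$ with $V = \mathrm{span}\{e_2,e_3\}$ abelian and $A := \mathrm{ad}_{e_1}|_V$ correctly reduces the Leibniz identities to $\beta A = 0$, $BA - AB = x_{11}A$, and the $(2,3)$-identity (which involves only $\beta$), after which $x_{11}=0$ and the commutant of $A$ produce the three displayed block shapes uniformly in $a$; the column $(x_{21},x_{31})^{t}$ is indeed unconstrained throughout. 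You are also right, and this is the part a careless ``direct calculation'' would miss, to isolate the degenerate parameter values: $a=-1$ in $\mathfrak{r}_{3,a}$ and $a=0$ in $\mathfrak{r}'_{3,a}$, where $\mathrm{tr}\,A=0$ disables the trace argument, and $a=0$ in $\mathfrak{r}_{3,a}$, where $A$ is singular so $x_{13}=0$ must come from the $(2,3)$-identity rather than from $\beta A=0$; all of these check out and yield the stated answer. One small imprecision in wording: for $\mathfrak{r}'_{3,0}$ the \emph{diagonal} of $BA-AB=x_{11}A$ only gives $x_{32}=-x_{23}$ (since $A$ has zero diagonal there), and $x_{11}=0$ comes from the \emph{off-diagonal} entries, which force $x_{11}=x_{22}-x_{33}=-x_{11}$; this does not affect the conclusion, but the phrase ``inspection of the diagonal'' fits only the case $a=-1$ of $\mathfrak{r}_{3,a}$.
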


Let us consider $\A$ for these Lie algebras $\g$. 
One can see from Lemma~\ref{der} that $\A$ contain 
\begin{align}\label{f}
F := 
\left\{ \left(
\begin{array}{ccc}
x_{11} & 0 & 0 \\
x_{21} & x_{22} & 0 \\
x_{31} & 0 & x_{22}  
\end{array}
\right) \mid x_{11}, x_{22} > 0 \right\} . 
\end{align}
For the later use, we prepare the following lemma, 
which can be applied for all Lie algebras we have to consider. 

\begin{lem}\label{F}
Let $\g$ be a three-dimensional Lie algebra, and fix a basis of $\g$. 
If $F \subset \A$ holds, 
then the following $L^{\prime}$ is a set of representatives of $\PM$$:$ 
\begin{align}
L^{\prime} 
:= \left\{ \left(
\begin{array}{ccc}
1 & 0 & 0 \\
0 & 1 & 0 \\
0 & a_{32} & a_{33}
\end{array}
\right) \mid a_{33} > 0 \right\} . 
\end{align}
\end{lem}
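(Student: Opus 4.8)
The plan is to invoke the criterion of Lemma~\ref{lem:representatives}: it suffices to show that for every $g \in \mathrm{GL}_3(\R)$ the double coset $[[g]] = \A \cdot g \cdot \mathrm{O}(3)$ meets $L'$. Since $F \subset \A$ by hypothesis, and $\mathrm{O}(3)$ already appears on the right in $[[g]]$, it is enough to produce $f \in F$ and $k \in \mathrm{O}(3)$ with $f g k \in L'$. I would carry this out in two steps: first use right multiplication by $\mathrm{O}(3)$ to bring $g$ into lower-triangular form, then use left multiplication by $F$ to normalise it into $L'$.

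For the first step I would apply the $QR$ decomposition to the transpose: write ${}^{t}g = QR$ with $Q \in \mathrm{O}(3)$ and $R$ upper triangular with strictly positive diagonal entries, which is possible because $g$, hence ${}^{t}g$, is invertible. Setting $k := Q$ gives $g k = {}^{t}R$, a lower-triangular matrix whose diagonal entries $b_{11}, b_{22}, b_{33}$ are all positive. The positivity of these diagonal entries is the one point that must be tracked carefully, since it is exactly what makes the normalisation in the next step compatible with the constraints $x_{11}, x_{22} > 0$ defining $F$ and $a_{33} > 0$ defining $L'$.

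For the second step, writing $gk$ as the lower-triangular matrix with entries $b_{ij}$, I would compute $f \cdot (gk)$ for a general $f \in F$ and solve for the parameters of $f$. Reading off entries, the requirement $f(gk) \in L'$ becomes a triangular system: the diagonal forces $x_{11} = 1/b_{11}$ and $x_{22} = 1/b_{22}$, both positive by the previous step, so that $f$ is a legitimate element of $F$; the two entries below the diagonal in the first column are then killed by choosing $x_{21} = -x_{22} b_{21}/b_{11}$ and $x_{31} = -x_{22} b_{31}/b_{11}$; and the remaining entries yield $a_{32} = x_{22} b_{32}$ together with $a_{33} = x_{22} b_{33} = b_{33}/b_{22} > 0$. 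Hence $f(gk) \in L'$, and Lemma~\ref{lem:representatives} finishes the argument.

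I expect the computation to be routine once this reduction is in place; the only genuinely necessary ingredient is the positive-diagonal $QR$ decomposition, which is precisely what guarantees that the scalars realising the normalisation fall in the prescribed positive ranges. It is also worth recording that $F$ is a subgroup of $\mathrm{GL}_3(\R)$ containing the positive scalings, so the operations above indeed remain inside $\A$, although this closure property is not strictly needed for the covering statement itself.
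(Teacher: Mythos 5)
Your proposal is correct and takes essentially the same route as the paper: reduce via Lemma~\ref{lem:representatives}, right-multiply by $k \in \mathrm{O}(3)$ to reach lower-triangular form with positive diagonal, then left-multiply by an explicit element of $F$ --- indeed your $f$, with $x_{11} = 1/b_{11}$, $x_{22} = 1/b_{22}$, $x_{21} = -b_{21}/(b_{11}b_{22})$, $x_{31} = -b_{31}/(b_{11}b_{22})$, is exactly the paper's $\varphi$. The only (harmless) difference is that you justify the existence of $k$ explicitly via the $QR$ decomposition of ${}^{t}g$, a step the paper simply asserts.
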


\begin{proof}
Take any $g \in \mathrm{GL}_3(\mathbb{R})$. 
By Lemma~\ref{lem:representatives}, 
we have only to show that there exists $g^{\prime} \in L^{\prime}$ such that 
$g^{\prime} \in [[ g ]]$. 
First of all, one knows that there exists $k \in \mathrm{O}(3)$ such that 
\begin{align}
gk = \left(
\begin{array}{ccc}
g_{11} & 0 & 0 \\
g_{21} & g_{22} & 0 \\
g_{31} & g_{32} & g_{33}
\end{array}
\right) , \quad 
g_{11}, g_{22}, g_{33} > 0 . 
\end{align} 
By assumption, we can take 
\begin{align}
\varphi := 
\frac{1}{g_{11} g_{22}} 
\left( 
\begin{array}{rcc}
g_{22} & 0 & 0 \\
-g_{21} & g_{11} & 0 \\
-g_{31} & 0 & g_{11}  
\end{array}
\right) \in F \subset \A . 
\end{align}
By a direct calculation, one has
\begin{align}
[[ g ]] \ni \varphi g k = 
\left(
\begin{array}{ccc}
1 & 0 & 0 \\
0 & 1 & 0 \\
0 & g_{32}/g_{22} & g_{33}/g_{22} 
\end{array}
\right) =: g^{\prime} . 
\end{align}
Since $g^{\prime} \in L^{\prime}$, we complete the proof. 
\end{proof}

\subsection{Case of $\g=\mathfrak{r}_3$}

In this subsection, 
we give an explicit expression of $\PM$ for $\g = \mathfrak{r}_3$. 
We fix a basis $\{ e_1 , e_2 , e_3 \}$ of $\mathfrak{r}_3$ 
whose bracket relations are given by 
\begin{align}
[e_1 , e_2] = e_2 + e_3 , \quad [e_1 , e_3] = e_3 . 
\end{align} 

From Lemma~\ref{der}, we have 
\begin{align}\label{d1}
\displaystyle \D = \left\{ \left(
\begin{array}{ccc}
x_{11} & 0 & 0 \\
x_{21} & x_{22} & 0 \\
x_{31} & x_{32} & x_{22} 
\end{array}
\right) 
\mid x_{11}, x_{21}, x_{22}, x_{31}, x_{32} \in \R 
\right\} . 
\end{align}
This yields that 
\begin{align}\label{a1}
\displaystyle (\A)^0 = \left\{ \left(
\begin{array}{ccc}
x_{11} & 0 & 0 \\
x_{21} & x_{22} & 0 \\
x_{31} & x_{32} & x_{22} 
\end{array}
\right) \mid x_{11}, x_{22} > 0 \right\} . 
\end{align}
Therefore, we can apply Lemma~\ref{F} for this case. 

\begin{prop}\label{PM1}
Let $\g = \mathfrak{r}_3$. 
Then the following $U$ is a set of representatives of $\PM$$:$  
\begin{align}
\displaystyle U = \left\{  \left(
\begin{array}{ccc}
1 & 0 & 0 \\
0 & 1 & 0 \\
0 & 0 & 1/\lambda 
\end{array}
\right) \mid \lambda > 0 \right\} . 
\end{align}
\end{prop}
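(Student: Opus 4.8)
The plan is to start from the set of representatives $L'$ furnished by Lemma~\ref{F}, and then to use the single extra degree of freedom that $(\A)^0$ possesses beyond $F$ in order to collapse $L'$ down to $U$. Comparing the explicit form (\ref{a1}) of $(\A)^0$ with the defining form (\ref{f}) of $F$, one sees immediately that $F \subset (\A)^0 \subset \A$, so Lemma~\ref{F} applies and guarantees that
\[
L' = \left\{ \left( \begin{array}{ccc} 1 & 0 & 0 \\ 0 & 1 & 0 \\ 0 & a_{32} & a_{33} \end{array} \right) \mid a_{33} > 0 \right\}
\]
is already a set of representatives of $\PM$. Since membership in a common double coset is an equivalence relation, by Lemma~\ref{lem:representatives} it then suffices to show that every $g' \in L'$ satisfies $[[g']] \cap U \neq \emptyset$; combined with the fact that $L'$ is a set of representatives, this forces $U$ to be one as well.

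First I would fix $g' \in L'$ and search within $(\A)^0$ for an automorphism eliminating the off-diagonal entry $a_{32}$. The key point is that, whereas $F$ carries a zero in the $(3,2)$ slot, the form (\ref{a1}) shows that $(\A)^0$ for $\mathfrak{r}_3$ contains a free parameter $x_{32}$ there; this is exactly the freedom needed to clear $a_{32}$. Concretely, I would take
\[
\varphi := \left( \begin{array}{ccc} 1 & 0 & 0 \\ 0 & 1 & 0 \\ 0 & -a_{32} & 1 \end{array} \right),
\]
which lies in $(\A)^0 \subset \A$ by (\ref{a1}), since it has $x_{11} = x_{22} = 1 > 0$ and its lower-right entry equals $x_{22}$, as the form of $(\A)^0$ requires. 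A direct multiplication then yields
\[
\varphi g' = \left( \begin{array}{ccc} 1 & 0 & 0 \\ 0 & 1 & 0 \\ 0 & 0 & a_{33} \end{array} \right),
\]
so that, putting $\lambda := 1/a_{33} > 0$, we get $\varphi g' \in U$, while $\varphi g' \in \A \cdot g' \subset [[g']]$. This establishes $[[g']] \cap U \neq \emptyset$ for every $g' \in L'$ and completes the argument.

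I do not expect a genuine obstacle here: the entire content is the structural observation that the parameter $x_{32}$, present in $(\A)^0$ but absent from $F$, is precisely what removes the residual off-diagonal entry surviving in $L'$. The only delicate bookkeeping point is to verify that the chosen $\varphi$ really does belong to $(\A)^0$ --- in particular that its $(3,3)$ entry is constrained to coincide with its $(2,2)$ entry --- which the explicit description (\ref{a1}) makes transparent. Note also that, as in the definition and in Lemma~\ref{lem:representatives}, a set of representatives need only cover $\PM$ and need not consist of pairwise inequivalent elements, so the covering property established above is all that is required.
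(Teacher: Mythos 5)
Your proposal is correct and follows essentially the same route as the paper: apply Lemma~\ref{F} to reduce to $L'$, then use the extra parameter $x_{32}$ visible in (\ref{a1}) to build exactly the same automorphism $\varphi$ clearing the $(3,2)$-entry, and set $\lambda := 1/a_{33}$. The only difference is cosmetic — you phrase the reduction via transitivity of the double-coset relation applied to elements of $L'$, while the paper starts from an arbitrary $g \in \mathrm{GL}_3(\mathbb{R})$ and chains the two steps directly — which changes nothing of substance.
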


\begin{proof}
Take any $g \in \mathrm{GL}_3(\R)$. 
By Lemma~\ref{lem:representatives}, 
we have only to show that 
there exists $\lambda > 0$ such that 
\begin{align}
\left(
\begin{array}{ccc}
1 & 0 & 0 \\
0 & 1 & 0 \\
0 & 0 & 1/\lambda 
\end{array}
\right) \in [[g]] . 
\end{align}
We use $L^{\prime}$ defined in Lemma~\ref{F}. 
One has from (\ref{a1}) 
and Lemma~\ref{F} that there exists $g^{\prime} \in L^{\prime}$ such that $g^\prime \in [[g]]$. 
Since $g' \in L^{\prime}$, one can write 
\begin{align}
g^{\prime} = \left(
\begin{array}{ccc}
1 & 0 & 0 \\
0 & 1 & 0 \\
0 & a_{32} & a_{33} 
\end{array}
\right) , \quad a_{33} > 0 . 
\end{align} 
It follows from (\ref{a1}) that 
\begin{align}
\varphi := 
\left(
\begin{array}{ccc}
1 & 0 & 0 \\
0 & 1 & 0 \\
0 & -a_{32} & 1
\end{array}
\right) \in (\A)^0 . 
\end{align} 
This shows that 
\begin{align}
[[g]] \ni \varphi g^{\prime} = 
\left( 
\begin{array}{ccc} 
1 & 0 & 0 \\ 
0 & 1 & 0 \\ 
0 & 0 & a_{33} 
\end{array}
\right) . 
\end{align}
Therefore, by putting $\lambda := 1 / a_{33}$, we complete the proof. 
\end{proof}

\subsection{Case of $\g=\mathfrak{r}_{3, a}$ ($-1 \leq a < 1$)} 

In this subsection, we give an explicit expression of $\PM$ for 
$\g=\mathfrak{r}_{3, a}$. 
Throughout this subsection, we fix $a$ satisfying $-1 \leq a < 1$, and 
a basis $\{ e_1 , e_2 , e_3 \}$ of $\mathfrak{r}_{3, a}$ 
whose bracket relations are given by 
\begin{align}
[e_1 , e_2] = e_2, \quad [e_1 , e_3] = a e_3 . 
\end{align} 

From Lemma~\ref{der}, we have 
\begin{align}\label{d2}
\displaystyle \D = \left\{ \left(
\begin{array}{ccc}
x_{11} & 0 & 0 \\
x_{21} & x_{22} & 0 \\
x_{31} & 0 & x_{33} 
\end{array}
\right) 
\mid x_{11}, x_{21}, x_{22}, x_{31}, x_{33} \in \R 
\right\} . 
\end{align}
This yields that 
\begin{align}\label{a2}
\displaystyle (\A)^0 = \left\{ \left(
\begin{array}{ccc}
x_{11} & 0 & 0 \\
x_{21} & x_{22} & 0 \\
x_{31} & 0 & x_{33} 
\end{array}
\right) \mid x_{11}, x_{22}, x_{33} > 0 \right\} . 
\end{align}

\begin{prop}\label{PM2}
Let $\g=\mathfrak{r}_{3, a}$. 
Then the following $U$ is a set of representatives of $\PM$$:$ 
\begin{align}
\displaystyle U = \left\{  \left(
\begin{array}{ccc}
1 & 0 & 0 \\
0 & 1 & 0 \\
0 & \lambda & 1
\end{array}
\right) \mid \lambda \in \mathbb{R} \right\} . 
\end{align}
\end{prop}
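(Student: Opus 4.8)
The plan is to follow exactly the pattern of the proof of Proposition~\ref{PM1}, using the set of representatives $L^{\prime}$ furnished by Lemma~\ref{F} as an intermediate step and then normalizing one further entry. The first thing I would check is that $F \subset \A$, so that Lemma~\ref{F} is applicable. This is immediate from the description (\ref{a2}) of $(\A)^0$: setting $x_{33} = x_{22}$ in (\ref{a2}) recovers exactly the group $F$ of (\ref{f}), whence $F \subset (\A)^0 \subset \A$. Lemma~\ref{F} then guarantees that
\begin{align*}
L^{\prime} = \left\{ \left( \begin{array}{ccc} 1 & 0 & 0 \\ 0 & 1 & 0 \\ 0 & a_{32} & a_{33} \end{array} \right) \mid a_{33} > 0 \right\}
\end{align*}
is already a set of representatives of $\PM$. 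By Lemma~\ref{lem:representatives} it therefore suffices to show that every element of $L^{\prime}$ lies in $[[g']]$ for some $g' \in U$; that is, to reduce a general element of $L^{\prime}$ into $U$.

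The key step is to exploit the fact that for $\mathfrak{r}_{3, a}$ the $(2,2)$- and $(3,3)$-entries of $(\A)^0$ vary independently, in contrast to the case $\g = \mathfrak{r}_3$, where (\ref{a1}) forces $x_{33} = x_{22}$. Concretely, given
\begin{align*}
g^{\prime} = \left( \begin{array}{ccc} 1 & 0 & 0 \\ 0 & 1 & 0 \\ 0 & a_{32} & a_{33} \end{array} \right) \in L^{\prime}, \quad a_{33} > 0,
\end{align*}
I would take the diagonal element
\begin{align*}
\varphi := \left( \begin{array}{ccc} 1 & 0 & 0 \\ 0 & 1 & 0 \\ 0 & 0 & 1/a_{33} \end{array} \right),
\end{align*}
which lies in $(\A)^0$ by (\ref{a2}) since $1/a_{33} > 0$. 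A direct multiplication then gives
\begin{align*}
[[g^{\prime}]] \ni \varphi g^{\prime} = \left( \begin{array}{ccc} 1 & 0 & 0 \\ 0 & 1 & 0 \\ 0 & a_{32}/a_{33} & 1 \end{array} \right),
\end{align*}
and setting $\lambda := a_{32}/a_{33}$ exhibits an element of $U$ inside $[[g^{\prime}]] = [[g]]$. Since $a_{32} \in \R$ and $a_{33} > 0$ are arbitrary, $\lambda$ ranges over all of $\R$, matching the stated parametrization of $U$.

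I do not expect any serious obstacle in this argument: once $F \subset \A$ is verified and Lemma~\ref{F} is invoked, the remaining reduction is a single diagonal scaling and is entirely routine. The one point requiring a little care is to confirm that the normalizing map $\varphi$ genuinely lies in $(\A)^0$, i.e.\ that rescaling the third coordinate alone is an allowable operation; this is precisely where the difference between $\mathfrak{r}_{3, a}$ and $\mathfrak{r}_3$ shows up, since here $x_{33}$ is free and $a_{33}$ can be scaled to $1$, whereas for $\mathfrak{r}_3$ one may only scale $a_{33}$ together with the $(2,2)$-entry and is thus left with the one-parameter family $1/\lambda$ of Proposition~\ref{PM1}. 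I would also emphasize that only the covering property of Lemma~\ref{lem:representatives} is needed, so no injectivity (distinctness of the various $\lambda$) has to be established at this stage.
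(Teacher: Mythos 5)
Your proposal is correct and follows essentially the same route as the paper's own proof: it invokes Lemma~\ref{F} (after noting $F \subset (\A)^0$ via (\ref{a2})) to pass to $L^{\prime}$, then applies the very same diagonal element $\varphi = \mathrm{diag}(1,1,1/a_{33}) \in (\A)^0$ to land in $U$ with $\lambda = a_{32}/a_{33}$. No gaps; the remark that only the covering property of Lemma~\ref{lem:representatives} is needed is also consistent with the paper.
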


\begin{proof}
Take any $g \in \mathrm{GL}_3(\R)$. 
By Lemma~\ref{lem:representatives}, 
we have only to show that there exists $\lambda \in \mathbb{R}$ such that 
\begin{align}
\left(
\begin{array}{ccc}
1 & 0 & 0 \\
0 & 1 & 0 \\
0 & \lambda & 1
\end{array}
\right) \in [[g]] . 
\end{align} 
By (\ref{a2}) and Lemma~\ref{F}, 
there exists $g^{\prime} \in L^{\prime}$ such that $g^{\prime} \in [[g]]$. 
Since $g' \in L^{\prime}$, one can write 
\begin{align}
g^{\prime} = \left(
\begin{array}{ccc}
1 & 0 & 0 \\
0 & 1 & 0 \\
0 & a_{32} & a_{33}
\end{array}
\right), \quad 
a_{33} > 0 . 
\end{align} 
It follows from (\ref{a2}) that 
\begin{align}
\varphi  := \left(
\begin{array}{ccc}
1 & 0 & 0 \\
0 & 1 & 0 \\
0 & 0 & 1/a_{33}
\end{array}
\right) \in (\A)^0 . 
\end{align} 
This yields that 
\begin{align}
[[ g ]] \ni \varphi g^{\prime} = 
\left( 
\begin{array}{ccc} 
1 & 0 & 0 \\ 
0 & 1 & 0 \\ 
0 & a_{32} / a_{33} & 1 
\end{array}
\right) . 
\end{align}
Therefore, by putting $\lambda := a_{32} / a_{33}$, we complete the proof. 
\end{proof}

\subsection{Case of $\g=\mathfrak{r}^{\prime}_{3, a}$ ($a \geq 0$)} 

In this subsection, we give an explicit expression of $\PM$ for 
$\g=\mathfrak{r}^{\prime}_{3, a}$. 
Throughout this subsection, we fix $a$ satisfying $a \geq 0$, 
and a basis $\{ e_1 , e_2 , e_3 \}$ of $\mathfrak{r}_{3, a}$ 
whose bracket relations are given by 
\begin{align}
[e_1 , e_2] = a e_2 - e_3, \quad [e_1 , e_3] = e_2 + a e_3 . 
\end{align} 

From Lemma~\ref{der}, we have 
\begin{align}\label{d3}
\displaystyle \D = \left\{ \left(
\begin{array}{ccc}
x_{11} & 0 & 0 \\ 
x_{21} & x_{22} & - x_{23} \\ 
x_{31} & x_{23} & x_{22} 
\end{array} 
\right) 
\mid x_{11}, x_{21}, x_{22}, x_{23}, x_{31} \in \R 
\right\} . 
\end{align} 
This yields that we can also apply Lemma~\ref{F} for this case. 

\begin{prop}\label{PM3}
Let $\g=\mathfrak{r}^{\prime}_{3, a}$. 
Then the following $U$ is a set of representatives of $\PM$$:$ 
\begin{align}
\displaystyle U = \left\{  \left(
\begin{array}{ccc}
1 & 0 & 0 \\
0 & 1 & 0 \\
0 & 0 & 1/\lambda
\end{array}
\right) \mid \lambda \geq 1 \right\} . 
\end{align}
\end{prop}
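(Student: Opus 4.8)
The plan is to follow the same two-stage strategy as in Propositions~\ref{PM1} and~\ref{PM2}. First I would note that, by Lemma~\ref{der} and~(\ref{d3}), the matrices in $F$ (those with $x_{23}=0$ and positive diagonal) lie in $(\A)^0$, so $F \subset \A$ and Lemma~\ref{F} applies. Hence for every $g \in \GL$ there is $g' \in L'$ with $g' \in [[g]]$, and writing $g' = \mathrm{diag}(1,B)$ with
\[
B = \begin{pmatrix} 1 & 0 \\ a_{32} & a_{33} \end{pmatrix}, \qquad a_{33} > 0,
\]
the task becomes a $2\times 2$ problem in the $(e_2,e_3)$-block, in which $\det B = a_{33} > 0$.

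The essential new feature, absent in the cases $\mathfrak{r}_3$ and $\mathfrak{r}_{3,a}$, is the parameter $x_{23}$ in~(\ref{d3}). Exponentiating the skew part it generates shows that $(\A)^0$ contains all block matrices $\mathrm{diag}(1, rR(\theta))$, where $r>0$, $\theta \in \R$, and $R(\theta)$ denotes the rotation of the $(e_2,e_3)$-plane by angle $\theta$. Together with the rotations $\mathrm{diag}(1,Q)$, $Q \in \mathrm{SO}(2)$, supplied by $\mathrm{O}(3)$ acting on the right, and recalling that $g' = \mathrm{diag}(1,B)$ is block diagonal, the reductions of $B$ available inside $[[g]]$ are exactly $B \mapsto rR(\theta)\,B\,Q$.

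I would then reduce $B$ by a polar-plus-spectral decomposition. Since $\det B>0$, the left polar decomposition $B = PO$ has $P$ symmetric positive definite and $O \in \mathrm{SO}(2)$; absorbing $O$ into the right $\mathrm{SO}(2)$-factor replaces $B$ by $P$. Writing $P = R(\psi)\,\mathrm{diag}(\sigma_1,\sigma_2)\,R(-\psi)$ with $\sigma_1,\sigma_2 > 0$ and using $R(-\psi)$ on the left and $R(\psi)$ on the right diagonalizes the block to $\mathrm{diag}(\sigma_1,\sigma_2)$; a conjugation by $R(\pi/2)$ interchanges the two entries if needed, so one may assume $\sigma_1 \geq \sigma_2$. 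Finally scaling by $r = 1/\sigma_1$ normalizes the block to $\mathrm{diag}(1,\mu)$ with $\mu = \sigma_2/\sigma_1 \in (0,1]$. Putting $\lambda := 1/\mu \geq 1$ gives $\mathrm{diag}(1,1,1/\lambda) \in [[g]]$, an element of $U$, and Lemma~\ref{lem:representatives} completes the argument.

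The point requiring care --- and the reason $U$ carries the restriction $\lambda \geq 1$ rather than $\lambda > 0$ --- is this last step. One must verify that diagonalization, the ordering of $\sigma_1,\sigma_2$, and the normalization can all be carried out with the available groups, namely scaled rotations on the left and $\mathrm{SO}(2)$ on the right, all of determinant $+1$, so that no reflection outside $(\A)^0$ is ever needed and the block form $\mathrm{diag}(1,\cdot)$ is preserved throughout. Since the two singular values of $B$ can be interchanged, the only genuine block invariant is the ratio $\sigma_2/\sigma_1 \leq 1$, which is exactly what forces $\lambda \geq 1$. As in Propositions~\ref{PM1} and~\ref{PM2}, only surjectivity onto $\PM$ is needed for the statement, so distinctness of the classes for distinct $\lambda$ need not be addressed here.
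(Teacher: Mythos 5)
Your proposal is correct and follows essentially the same route as the paper: reduce to $L'$ via Lemma~\ref{F}, use the scaled rotations $\mathrm{diag}(1,rR(\theta))$ coming from the $x_{23}$-parameter in~(\ref{d3}) together with right multiplication by $\mathrm{O}(3)$ to bring the $2\times 2$ block to diagonal form with ordered positive entries, and then normalize by scaling to get $\mathrm{diag}(1,1,1/\lambda)$ with $\lambda \geq 1$. The only difference is cosmetic: where the paper simply quotes the decomposition $\mathrm{GL}_2(\mathbb{R}) = \mathrm{SO}(2)\cdot\{\mathrm{diag}(x,y) \mid x \geq y > 0\}\cdot\mathrm{O}(2)$ as known linear algebra (Cartan decomposition), you derive it by hand via polar decomposition, the spectral theorem, and the swap by $R(\pi/2)$.
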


\begin{proof}
Take any $g \in \mathrm{GL}_3(\R)$. 
By Lemma~\ref{lem:representatives}, 
we have only to show that there exists $\lambda \geq 1$ such that 
\begin{align}
\left(
\begin{array}{ccc}
1 & 0 & 0 \\
0 & 1 & 0 \\
0 & 0 & 1/\lambda
\end{array}
\right) \in [[g]] . 
\end{align} 
By (\ref{d3}), one can see that $(\A)^0$ contains $F$ defined by (\ref{f}). 
Hence, by Lemma~\ref{F}, 
there exists 
$g^{\prime} \in L^{\prime}$ such that $g^{\prime} \in [[g]]$. 
Since $g' \in L^{\prime}$, one can write 
\begin{align}
g^{\prime} = \left(
\begin{array}{ccc}
1 & 0 & 0 \\
0 & 1 & 0 \\
0 & a_{32} & a_{33}
\end{array}
\right) , \ 
a_{33} > 0 . 
\end{align} 
Then, from (\ref{d3}), one has 
\begin{align} 
R(\theta) := 
\left(
\begin{array}{ccc}
1 & 0 & 0 \\
0 & \cos(\theta) & - \sin(\theta) \\
0 & \sin(\theta) & \cos(\theta)
\end{array}
\right) \in (\A)^0 . 
\end{align} 
It follows from linear algebra (or the theory of Cartan decomposition) that 
\begin{align} 
\mathrm{GL_2}(\mathbb{R}) = 
\mathrm{SO}(2) \cdot 
\left\{ \left(
\begin{array}{cc}
x & 0 \\ 0 & y 
\end{array} 
\right) \mid 
x \geq y > 0 
\right\} 
\cdot \mathrm{O}(2) . 
\end{align} 
This yields that there exist $\theta \in \mathbb{R}$ 
and $k \in \mathrm{O}(3)$ such that 
\begin{align}
[[ g ]] \ni R(\theta) g^{\prime} k = \left(
\begin{array}{ccc}
1 & 0 & 0 \\
0 & x & 0 \\
0 & 0 & y 
\end{array}
\right) =: g'' , \quad 
x \geq y > 0 
\end{align}
By using (\ref{d3}) again, one has 
\begin{align}
\varphi := \left(
\begin{array}{ccc}
1 & 0 & 0 \\
0 & 1/x & 0 \\
0 & 0 & 1/x 
\end{array}
\right) \in (\A)^0 . 
\end{align} 
This yields that 
\begin{align}
[[ g ]] \ni \varphi g'' = 
\left(
\begin{array}{ccc}
1 & 0 & 0 \\
0 & 1 & 0 \\
0 & 0 & y/x 
\end{array}
\right) . 
\end{align}
Therefore, by putting $\lambda := x/y \geq 1$, we complete the proof. 
\end{proof}

\section{Three-dimensional solvsolitons} 

In this section, we give a Milnor-type theorem 
for each three-dimensional solvable Lie algebra $\g$, 
and apply it to determine which points in the moduli space $\PM$ are solvsolitons. 
Note that a classification of three-dimensional solvsolitons has already been obtained by Lauret (\cite{Lau11}), 
but we here reprove it, 
since Milnor-type theorems itself and their application would be interesting. 

\subsection{Preliminaries on curvatures}  

In this subsection, 
we recall the notion of solvsolitons introduced by Lauret (\cite{Lau11}), 
and study the Ricci operators of three-dimensional solvable Lie algebras. 
Note that we discuss everything on a metric Lie algebra $(\g , \naiseki)$, 
instead of the simply-connected Lie group with Lie algebra $\g$ 
equipped with the corresponding left-invariant Riemannian metric. 

\begin{defi}
An inner product $\naiseki$ on a solvable Lie algebra $\g$ is called a 
\textit{solvsoliton} 
if it satisfies 
\begin{align}
\mathrm{Ric}_{\naiseki} \in \mathbb{R} \oplus \Der , 
\end{align}
where $\mathrm{Ric}_{\naiseki}$ is the Ricci operator of $\naiseki$. 
If $\g$ is nilpotent, then a solvsoliton on $\g$ is called a 
\textit{nilsoliton}. 
\end{defi}

Here we recall the definition of the Ricci operator of $(\g , \naiseki)$. 
First of all, the Levi-Civita connection 
$\nabla: \g \times \g \longrightarrow \g$ is given by
\begin{align}
2 \langle \nabla _X Y,Z \rangle = \langle [Z,X],Y \rangle + \langle X,[Z,Y] \rangle +\langle [X,Y], Z \rangle .
\end{align}
The Riemannian curvature $R$ is defined by
\begin{align}
R(X,Y)Z:=\nabla _X \nabla _Y Z - \nabla _Y \nabla _X Z - \nabla _{[X,Y]} Z.
\end{align}
Let $\{ e_i \}$ be an orthonormal basis of $\g$ with respect to $\naiseki$. 
The Ricci operator 
$\mathrm{Ric}_{\naiseki} : \g \rightarrow \g$ is defined by
\begin{align}
\mathrm{Ric}_{\naiseki}(X):=\sum R(X,e_i)e_i.
\end{align}

Let us consider the equivalence relation, 
isometry and scaling in the sense of Definition~\ref{I}. 
Recall that $[\naiseki]$ denotes the equivalence class of $\naiseki$. 
Then it is easy to see the following. 

\begin{prop}
\label{prop:soliton-pm}
Let $\naiseki$ and $\naiseki'$ be inner products on a solvable Lie algebra $\g$, 
and assume that $[\naiseki] = [\naiseki']$. 
If $\naiseki$ is a solvsoliton, then so is $\naiseki^{\prime}$. 
\end{prop}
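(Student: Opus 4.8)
The plan is to unwind the hypothesis $[\naiseki] = [\naiseki']$ through Definition~\ref{I}: it furnishes a constant $k > 0$ and an automorphism $f \in \Aut$ with $\langle \cdot, \cdot \rangle = k \langle f(\cdot), f(\cdot) \rangle'$. I would then track how the Ricci operator behaves under the two operations that make up this equivalence — scaling of the inner product and pullback by a Lie algebra automorphism — and read off that the defining condition $\mathrm{Ric} \in \R \oplus \Der$ is preserved.

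For the two transformation rules, I would argue directly from the formulas recalled just above the statement. Writing $h$ for the pulled-back inner product $\langle f(\cdot), f(\cdot)\rangle'$, the map $f$ is a linear isometry $(\g, h) \to (\g, \naiseki')$ that is simultaneously a Lie algebra automorphism; since the Koszul formula, the curvature $R$, and the Ricci operator are built only from the bracket and the inner product, $f$ intertwines them, giving $\mathrm{Ric}_h = f^{-1} \circ \mathrm{Ric}_{\naiseki'} \circ f$. For the scaling, if $\{e_i\}$ is orthonormal for $h$ then $\{e_i / \sqrt{k}\}$ is orthonormal for $k h$, and the definition $\mathrm{Ric}(X) = \sum R(X, e_i)e_i$ together with the scale-invariance of $\nabla$ (hence of $R$) yields $\mathrm{Ric}_{kh} = k^{-1}\mathrm{Ric}_h$. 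Since $\naiseki = k h$, combining the two gives $\mathrm{Ric}_{\naiseki} = k^{-1} f^{-1}\circ \mathrm{Ric}_{\naiseki'}\circ f$, equivalently $\mathrm{Ric}_{\naiseki'} = k\, f \circ \mathrm{Ric}_{\naiseki} \circ f^{-1}$.

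Next I would substitute the solvsoliton hypothesis $\mathrm{Ric}_{\naiseki} = cI + D$ with $c \in \R$ and $D \in \Der$, obtaining $\mathrm{Ric}_{\naiseki'} = (ck)I + k\, fDf^{-1}$. The first summand clearly lies in $\R\cdot I$, and the second lies in $\Der$: a short check using $f[X,Y] = [fX, fY]$ shows that $fDf^{-1}$ satisfies the Leibniz rule whenever $D$ does, and $\Der$ is a vector space stable under the scalar $k$. Hence $\mathrm{Ric}_{\naiseki'} \in \R \oplus \Der$, so $\naiseki'$ is a solvsoliton.

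There is no serious obstacle here — the statement is of the ``easy to see'' kind — and the only real point of care is bookkeeping: getting the scaling exponent right ($k^{-1}$ on the operator, not $k$) and tracking whether the conjugation is by $f$ or $f^{-1}$. The one piece of genuine structural content is the conjugation-invariance of $\Der$ under $\Aut$, namely that $fDf^{-1}$ is again a derivation; this is exactly what makes the soliton condition an invariant of the equivalence class, and it is the fact I would isolate as a one-line observation.
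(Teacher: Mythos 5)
Your proposal is correct, and it supplies exactly the argument the paper leaves implicit: the paper states this proposition without proof, calling it ``an easy observation,'' and the intended content is precisely your two transformation rules ($\mathrm{Ric}$ conjugates by $f$ under an automorphism and scales by $k^{-1}$ under scaling) together with the fact that $\Aut$-conjugation preserves $\Der$. Your bookkeeping ($\mathrm{Ric}_{\naiseki'} = k\, f \circ \mathrm{Ric}_{\naiseki} \circ f^{-1}$, hence $\mathrm{Ric}_{\naiseki'} = (ck)I + k f D f^{-1} \in \mathbb{R} \oplus \mathrm{Der}(\g)$) checks out.
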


This proposition is an easy observation, but has an important conclusion. 
That is, it is enough to consider $\PM$ 
to examine whether $\g$ admits a solvsoliton or not. 

\begin{rem}
It is worthwhile to mention that the uniqueness of solvsolitons holds. 
That is, if $\naiseki$ and $\naiseki^{\prime}$ are solvsolitons 
on a solvable Lie algebra $\g$, 
then $[\naiseki] = [\naiseki']$ holds. 
This follows from the proof of \cite[Theorem 5.1]{Lau11}. 
But, we will not use this in the latter arguments. 
In particular, for solvsolitons on three-dimensional solvable Lie algebras, 
the uniqueness can be directly seen 
from our classification. 
\end{rem}

At the end of this subsection, 
we calculate the Ricci curvatures of three-dimensional solvable Lie algebras in a unified way. 

\begin{lem}\label{ric}
Let $\g$ be a three-dimensional solvable Lie algebra, and $\naiseki$ be an inner product on $\g$. 
Suppose that there exist $a,b,c,d \in \mathbb{R}$ and 
an orthonormal basis 
$\{ x_1, x_2, x_3 \}$ 
with respect to $\naiseki$ 
such that the bracket relations are given by 
\begin{align*} 
[x_1, x_2] = a x_2 + b x_3, \quad [x_1, x_3] = c x_2 + d x_3. 
\end{align*} 
Then, the Ricci operator satisfies 
\begin{align*}
\mathrm{Ric}_{\naiseki}(x_i) 
= \left\{ 
\begin{array}{@{\,}ll} 
- (a^2 
+ d^2 
+ (1/2) (b+c)^2 ) \, x_1 & (i = 1) , \\ 
- (
a (a+d) + (1/2) (b^2 - c^2) ) \, x_2 - (a c + b d) \, x_3 & (i = 2) , \\ 
- (a c + b d) \, x_2 - (
d (a+d) - (1/2) (b^2 - c^2) ) \, x_3 & (i = 3) . 
\end{array} 
\right. 
\end{align*}
\end{lem}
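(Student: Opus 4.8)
The plan is to compute everything directly from the three formulas recalled above: the Koszul formula for the Levi-Civita connection $\nabla$, the definition of the curvature tensor $R$, and the trace defining $\mathrm{Ric}_{\naiseki}$. Throughout I work in the fixed orthonormal basis $\{x_1, x_2, x_3\}$, exploiting the structural feature that makes the computation manageable: the only nonzero brackets involve $x_1$, so $\mathrm{span}\{x_2, x_3\}$ is an abelian ideal and $x_1$ acts on it by the matrix $A = \left(\begin{smallmatrix} a & c \\ b & d \end{smallmatrix}\right)$. In particular $[x_2, x_3] = 0$, which annihilates a large number of terms from the outset.

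First I would compute all nine connection coefficients $\nabla_{x_i} x_j$ via the Koszul formula. Because the structure constants are concentrated in $\mathrm{ad}_{x_1}$, the answers organize themselves according to the symmetric and skew parts of $A$: one finds $\nabla_{x_1} x_1 = 0$, $\nabla_{x_2} x_2 = a x_1$, $\nabla_{x_3} x_3 = d x_1$, and $\nabla_{x_2} x_3 = \nabla_{x_3} x_2 = \tfrac{b+c}{2} x_1$ (the symmetric combination $\tfrac{b+c}{2}$ appearing in the $x_1$-components), while $\nabla_{x_1} x_2 = \tfrac{b-c}{2} x_3$ and $\nabla_{x_1} x_3 = \tfrac{c-b}{2} x_2$ (the skew combination), together with the two mixed terms $\nabla_{x_2} x_1$ and $\nabla_{x_3} x_1$ that are then forced by $\nabla_{x_i} x_j - \nabla_{x_j} x_i = [x_i, x_j]$. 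I would verify these against the torsion-free and metric-compatibility conditions as a first consistency check.

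Next I would substitute into $R(x_i, x_j) x_k = \nabla_{x_i}\nabla_{x_j} x_k - \nabla_{x_j}\nabla_{x_i} x_k - \nabla_{[x_i,x_j]} x_k$ and form $\mathrm{Ric}_{\naiseki}(x_i) = \sum_{j} R(x_i, x_j) x_j$ for $i = 1, 2, 3$. For $i = 1$ only $R(x_1, x_2) x_2$ and $R(x_1, x_3) x_3$ survive, and the two partial coefficients combine to the expected $-(a^2 + d^2 + \tfrac{1}{2}(b+c)^2)$. For $i = 2, 3$ I would collect both the diagonal entries (giving $-(a(a+d) + \tfrac12(b^2 - c^2))$ and $-(d(a+d) - \tfrac12(b^2-c^2))$) and the common off-diagonal entry $-(ac + bd)$.

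The computation is entirely routine, so the only real obstacle is bookkeeping. The delicate point is the appearance of the asymmetric terms $\pm\tfrac12(b^2 - c^2)$ and of the off-diagonal entry $-(ac+bd)$: these arise from cancellations between the skew-part contributions (through $\nabla_{x_1} x_2$ and $\nabla_{x_1} x_3$) and the symmetric-part contributions (through $\nabla_{x_2} x_3$ and $\nabla_{x_3} x_2$), so the signs must be tracked carefully. As a final safeguard I would confirm that the resulting matrix of $\mathrm{Ric}_{\naiseki}$ is symmetric, which it must be and which catches most arithmetic slips.
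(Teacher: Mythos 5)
Your proposal is correct and follows essentially the same route as the paper: compute the Levi-Civita connection in the given orthonormal basis (your connection coefficients, including $\nabla_{x_1}x_2 = \tfrac{b-c}{2}x_3$, $\nabla_{x_2}x_3 = \nabla_{x_3}x_2 = \tfrac{b+c}{2}x_1$, and the mixed terms forced by torsion-freeness, agree exactly with those in the paper's proof), then assemble the curvature tensor and trace it to obtain $\mathrm{Ric}_{\naiseki}$. The only cosmetic difference is that the paper organizes the Koszul computation via the symmetric map $U(X,Y)$ with $2\langle U(X,Y),Z\rangle = \langle [Z,X],Y\rangle + \langle X,[Z,Y]\rangle$, whereas you invoke the Koszul formula directly; the resulting computation is identical.
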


\begin{proof}
First of all, we calculate the Levi-Civita connection $\nabla$. 
A direct calculation shows that 
\begin{align} 
\nabla_{x_1} x_1 = 0 , \quad 
\nabla_{x_2} x_2 = a x_1 , \quad 
\nabla_{x_3} x_3 = d x_1 . 
\end{align} 
In order to calculate the other components, 
we use $U : \g \times \g \to \g$ defined by 
\begin{align*}
2 \langle U(X, Y) , Z \rangle = \langle [Z,X] , Y \rangle + \langle X , [Z,Y] \rangle 
\end{align*}
for every $X, Y, Z \in \g$. 
One can easily calculate that 
\begin{align}
U(x_1, x_2) = - (a/2) x_2 - (c/2) x_3 . 
\end{align} 
Note that $U$ is symmetric. 
Hence, one obtains that 
\begin{align} 
\begin{split} 
\nabla_{x_1} x_2 & = (1/2) [x_1 , x_2] + U(x_1, x_2) = ((b-c)/2) x_3 , \\ 
\nabla_{x_2} x_1 & = (1/2) [x_2 , x_1] + U(x_2, x_1) = - a x_2 - ((b+c)/2) x_3 . 
\end{split} 
\end{align} 
By changing the roles of $x_2$ and $x_3$, 
we also have 
\begin{align} 
\nabla_{x_1} x_3 = ((c-b)/2) x_2 , \quad 
\nabla_{x_3} x_1 = - d x_3 - ((b+c)/2) x_2 . 
\end{align} 
A similar calculation shows that $U(x_2, x_3) = ((b+c)/2) x_1$, 
which concludes 
\begin{align} 
\nabla_{x_2} x_3 = ((b+c)/2) x_1 , \quad 
\nabla_{x_3} x_2 = ((b+c)/2) x_1 . 
\end{align} 

One can thus calculate the Riemannian curvatures $R$. 
The above calculations of $\nabla$ yield that 
\begin{align*} 
R(x_1, x_2) x_2 &= - (a^2 + (3/4)b^2 -(1/4)c^2+(1/2)b c) x_1 , \\ 
R(x_1, x_3) x_3 &= - (-(1/4)b^2 +(3/4)c^2+d^2+(1/2)b c) x_1 . 
\end{align*} 
By summing up them, we obtain the Ricci curvature $\mathrm{Ric}_{\naiseki}(x_1)$. 
Similarly, one can obtain 
$\mathrm{Ric}_{\naiseki}(x_2)$ and $\mathrm{Ric}_{\naiseki}(x_3)$ by 
\begin{align*}
R(x_2, x_1) x_1 &= - (a^2 + (3/4)b^2 -(1/4)c^2+(1/2)b c) x_2 - (a c+b d)x_3, \\
R(x_2, x_3) x_3 &= ((1/4)b^2 +(1/4)c^2-a d+(1/2)b c) x_2, \\
R(x_3, x_1) x_1 &= - (a c+b d) x_2 - (-(1/4)b^2 +(3/4)c^2 + d^2 + (1/2)b c)x_3 , \\
R(x_3, x_2) x_2 &= ((1/4)b^2 +(1/4)c^2 -a d +(1/2)b c) x_3 . 
\end{align*} 
This completes the proof of the lemma. 
\end{proof} 

Lemma~\ref{ric} is a slight generalization of some known results. 
In fact, when $a+d \neq 0$ and $ac+bd = 0$, 
the Ricci operators were calculated by Milnor (\cite[Lemma~6.5]{Mil}). 
Note that the Ricci operators are diagonal in this case. 
Ha and Lee (\cite{HL}) also calculated the Ricci operators in some cases, 
which essentially correspond to the case of $a=0$. 
 
\subsection{Preliminaries on Milnor-type theorems}

In this subsection, 
we recall a method for studying all inner products on a given Lie algebra $\g$. 
This method is called a Milnor-type theorem in \cite{HTT}, 
since it generalizes the famous theorem by Milnor (\cite{Mil}). 

\begin{thm}
\label{MTT}
Let $U$ be a set of representatives of $\PM$. 
Then, for every inner product $\naiseki$ on $\g$, 
we have the following$:$ 
\begin{enumerate} 
\item
There exist $h \in U$, $\varphi \in \Aut$, and $k>0$ 
such that 
$\{ \varphi h e_1 , \ldots , \varphi h e_n \}$ 
is an orthonormal basis of $\g$ 
with respect to $k \naiseki$. 
\item 
The matrix expression of $\Der$ with respect to 
$\{ \varphi h e_1 , \ldots , \varphi h e_n \}$ 
coincides with 
\begin{align*}
\{ h^{-1} D h \in \GL \mid D \in \Der \} . 
\end{align*}  
\end{enumerate} 
\end{thm}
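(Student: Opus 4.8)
The plan is to translate both assertions into statements about the transitive $\GL$-action on $\M = \GL/\OO$ and then read them off from Lemma~\ref{lem:representatives}. The starting point for (1) is a dictionary between orthonormal frames and the action: for $A \in \GL$ and an inner product $\langle , \rangle$ on $\g$, the vectors $\{A e_1, \dots, A e_n\}$ form an orthonormal basis with respect to $\langle , \rangle$ if and only if $\langle , \rangle = A.\naiseki_0$, since either condition merely says that $A$ is an isometry from $(\g, \naiseki_0)$ onto $(\g, \langle , \rangle)$. Taking $A = \varphi h$ and $\langle , \rangle = k\naiseki$, the conclusion of (1) becomes equivalent to producing $h \in U$, $\varphi \in \Aut$ and $k > 0$ with $k\naiseki = (\varphi h).\naiseki_0$.

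To produce these, first I would use transitivity to fix some $g \in \GL$ with $\naiseki = g.\naiseki_0$. By Lemma~\ref{lem:representatives} there is an $h \in U$ lying in the double coset $[[g]]$, say $h = c\,\psi\, g\, k_1$ with $c \in \R^{\times}$, $\psi \in \Aut$ and $k_1 \in \OO$. Putting $\varphi := \psi^{-1} \in \Aut$ and using that scalars commute with everything gives $\varphi h = c\, g\, k_1$, whence $(\varphi h).\naiseki_0 = (c g).\naiseki_0 = c^{-2}(g.\naiseki_0) = c^{-2}\naiseki$, because $k_1 \in \OO$ fixes $\naiseki_0$ and the scalar $c$ acts on metrics through its square. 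Thus $k\naiseki = (\varphi h).\naiseki_0$ with $k := c^{-2}$, and the positivity of $k$ is automatic precisely because the scalar enters squared; the dictionary then yields (1).

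For (2) I would invoke the change-of-basis formula. Writing $B := \varphi h$, whose columns are the frame $\{\varphi h e_1, \dots, \varphi h e_n\}$, an operator with canonical-basis matrix $M$ acquires the matrix $B^{-1} M B = h^{-1}(\varphi^{-1} M \varphi) h$ in the new frame. The key algebraic fact is that conjugation by an automorphism preserves $\Der$: if $D \in \Der$ and $\varphi \in \Aut$, a one-line check based on $\varphi[x,y] = [\varphi x, \varphi y]$ shows $\varphi^{-1} D \varphi \in \Der$, and this conjugation is a bijection of $\Der$ onto itself. Hence as $M$ runs over the matrices of $\Der$ so does $\varphi^{-1} M \varphi$, and the matrix expression of $\Der$ with respect to the new frame is $\{\, h^{-1} D h \mid D \in \Der \,\}$, as claimed.

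I do not expect a serious obstacle: the proof is essentially the dictionary of the first paragraph combined with Lemma~\ref{lem:representatives} and the automorphism-invariance of $\Der$. The only place demanding care is the interplay in (1) between the free scaling constant $k$, the scalar factor $\R^{\times}$ of $\A$, and the $\OO$-ambiguity in the double coset, that is, checking that the positive constant $k$ emerges cleanly; but, as noted, this is forced by the scalar acting as its square, so nothing delicate remains.
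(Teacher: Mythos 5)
Your proposal is correct, but its relationship to the paper's proof differs between the two assertions. For assertion (1) the paper gives no argument at all: it simply cites \cite{HTT}, where that statement is proved. Your self-contained derivation---translating orthonormality of $\{ \varphi h e_1 , \ldots , \varphi h e_n \}$ into the orbit equation $k \naiseki = (\varphi h) . \naiseki_0$, then unwinding $h \in [[g]]$ via Lemma~\ref{lem:representatives}, absorbing the orthogonal factor into the stabilizer of $\naiseki_0$ and the scalar $c$ into $k = c^{-2} > 0$---is exactly the kind of argument the citation hides, and it is sound; in particular, the commutation of the scalar with $\psi^{-1}$ and the fact that the scalar acts through its square are precisely the points that needed checking, and you handled them correctly. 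For assertion (2) your route and the paper's are the same idea in two dialects: the paper observes that $\{ \varphi h e_i \}$ and $\{ h e_i \}$ have identical bracket relations (since $\varphi \in \Aut$), hence identical matrix expressions of $\Der$, and then applies the change-of-basis formula for $h$ alone; you apply the change-of-basis formula for $B = \varphi h$ in one step and then use that conjugation by an automorphism is a bijection of $\Der$ onto itself. These are equivalent---the paper's phrasing avoids writing the conjugation $\varphi^{-1} M \varphi$ explicitly, while yours makes the $\Aut$-invariance of $\Der$ explicit---so nothing of substance separates them there; the genuine added value of your write-up is that it supplies a proof of (1) where the paper outsources it.
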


\begin{proof} 
The first assertion has been proved in \cite{HTT}. 
We show the second assertion. 
One has that 
$\{ \varphi h e_1 , \ldots , \varphi h e_n \}$ 
and $\{ h e_1 , \ldots , h e_n \}$ 
have the same bracket relations, 
since $\varphi \in \Aut$. 
This yields that the matrix expressions of $\Der$ 
with respect to these two bases are the same. 
Furthermore, 
the latter basis and $\{ e_1 , \ldots , e_n \}$ are related by 
\begin{align} 
(h e_1 , \ldots , h e_n) = (e_1 , \ldots , e_n) h . 
\end{align} 
Therefore, an elementary linear algebra shows that 
the matrix expression of $\Der$ with respect to 
$\{ h e_1 , \ldots , h e_n \}$ coincides with 
the one in the second assertion. 
This completes the proof. 
\end{proof} 

By applying this theorem for a given Lie algebra $\g$, 
we can obtain a Milnor-type theorem. 
More precisely, 
the basis $\{ \varphi h e_1 , \ldots , \varphi h e_n \}$  
plays a similar role to the Milnor frames. 
Note that 
the bracket relations among elements of this basis depend only on $h \in U$, 
since $\varphi$ preserves the bracket product. 

In the following subsections, 
we will study the existence of solvsolitons on three-dimensional solvable Lie algebras. 
Note that we can omit the cases of $\g = \mathfrak{h}_3$ and $\mathfrak{r}_{3, 1}$, because of Remark~\ref{rem-trivial case}. 

\subsection{Case of $\g=\mathfrak{r}_3$}\label{4.4}

In this subsection, 
we prove that $\g = \mathfrak{r}_3$ does not admit solvsolitons. 
The main tool is the following Milnor-type theorem. 

\begin{prop}\label{M1}
For every inner product $\naiseki$ on $\g = \mathfrak{r}_3$, 
there exist $\lambda > 0$, $k > 0$, 
and an orthonormal basis $\{x_1, x_2, x_3\}$ with respect to $k \naiseki$ 
such that the bracket relations are given by 
\begin{align}
[x_1,x_2] = x_2 + \lambda x_3 , \quad [x_1,x_3] = x_3 . 
\end{align} 
Furthermore, the matrix expression of $\Der$ 
with respect to $\{x_1, x_2, x_3\}$ coincides with 
\begin{align*} 
\left\{ \left( 
\begin{array}{ccc} 
0 & 0 & 0 \\ 
x_{21} & x_{22} & 0 \\ 
x_{31} & x_{32} & x_{22} 
\end{array} 
\right) \mid x_{21}, x_{22}, x_{31}, x_{32} \in \R \right\} . 
\end{align*}  
\end{prop}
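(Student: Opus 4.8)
The plan is to derive the proposition as a direct application of the general Milnor-type theorem, Theorem~\ref{MTT}, using the explicit set of representatives $U$ of $\PM$ obtained in Proposition~\ref{PM1}. Recall that for $\g = \mathfrak{r}_3$ one has $U = \{ h_{\lambda} \mid \lambda > 0 \}$ with $h_{\lambda} = \mathrm{diag}(1, 1, 1/\lambda)$. Applying Theorem~\ref{MTT} to an arbitrary inner product $\naiseki$, I obtain some $h = h_{\lambda} \in U$, an automorphism $\varphi \in \Aut$, and a scalar $k > 0$ such that, putting $x_i := \varphi h e_i$, the frame $\{ x_1, x_2, x_3 \}$ is orthonormal with respect to $k \naiseki$. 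This already supplies the required $\lambda$, $k$, and orthonormal basis; what remains is to read off the bracket relations and the matrix form of $\Der$ in this basis.

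First I would compute the structure constants. Since $\varphi \in \Aut$, the frames $\{ x_i \} = \{ \varphi h e_i \}$ and $\{ h e_i \}$ have identical bracket relations, so it suffices to bracket $h e_1 = e_1$, $h e_2 = e_2$, and $h e_3 = (1/\lambda) e_3$. Starting from the defining relations $[e_1, e_2] = e_2 + e_3$ and $[e_1, e_3] = e_3$ and rewriting the right-hand sides in terms of the $h e_i$, a short computation gives $[h e_1, h e_2] = h e_2 + \lambda\, h e_3$ and $[h e_1, h e_3] = h e_3$; applying $\varphi$ then yields exactly $[x_1, x_2] = x_2 + \lambda x_3$ and $[x_1, x_3] = x_3$, as claimed.

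For the final assertion, Theorem~\ref{MTT}(2) identifies the matrix expression of $\Der$ with respect to $\{ x_i \}$ as $\{ h^{-1} D h \mid D \in \Der \}$, so I only need to check that conjugation by $h = \mathrm{diag}(1, 1, 1/\lambda)$ preserves the space $\Der$ from Lemma~\ref{der}(1). As conjugation by a diagonal matrix rescales the $(i,j)$-entry by the factor $h_{jj}/h_{ii}$, it fixes the diagonal together with the $(2,1)$-entry and multiplies the $(3,1)$- and $(3,2)$-entries by $\lambda$; since $\lambda > 0$, these free entries still sweep out all of $\R$, whence $h^{-1}\Der h = \Der$, matching the stated set. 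The whole argument is routine, being nothing more than the general machinery specialized to one representative; the only step demanding any care is the index bookkeeping in this last conjugation, where one must ensure the factor $\lambda$ lands on the correct off-diagonal entries and that $\lambda \neq 0$ prevents the conjugated space from collapsing to a proper subspace of $\Der$.
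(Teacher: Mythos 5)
Your proposal is correct and follows essentially the same route as the paper: both apply Theorem~\ref{MTT} with the representatives $g_\lambda = \mathrm{diag}(1,1,1/\lambda)$ from Proposition~\ref{PM1}, read off the brackets of $\{g_\lambda e_i\}$ and transport them by $\varphi$, and then compute $g_\lambda^{-1}\,\Der\, g_\lambda$ to see that conjugation merely rescales the $(3,1)$- and $(3,2)$-entries by $\lambda \neq 0$, leaving the set unchanged. No gaps; the observation that the rescaled free entries still sweep out all of $\R$ is exactly the point the paper also makes.
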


\begin{proof} 
Let $\{ e_1, e_2, e_3 \}$ be the canonical basis of $\mathfrak{r}_3$. 
Recall that the bracket relations are given by 
\begin{align}
[e_1, e_2] = e_2 + e_3 , \quad [e_1, e_3] = e_3 . 
\end{align}
We have proved in Proposition~\ref{PM1} 
that the following $U$ is a set of representatives of $\PM$: 
\begin{align} 
U :=  
\left\{ 
g_\lambda := 
\left(
\begin{array}{ccc}
1 & 0 & 0 \\
0 & 1 & 0 \\
0 & 0 & 1/\lambda
\end{array}
\right) 
\mid \lambda > 0 \right\} .  
\end{align}
Take any inner product $\naiseki$ on $\g$. 
By Theorem~\ref{MTT}, 
there exist $g_{\lambda} \in U$, $k>0$, and $\varphi \in \Aut$ 
such that 
$\{ \varphi g_{\lambda} e_1,  \varphi g_{\lambda} e_2, \varphi g_{\lambda} e_3 \}$ 
is orthonormal with respect to $k \naiseki$. 
Put $x_i := \varphi g_{\lambda} e_i$ for $i = 1, 2, 3$. 
We calculate 
the bracket relations among them. 
One has 
\begin{align}\label{ge} 
g_\lambda e_1 = e_1 , \quad 
g_\lambda e_2 = e_2 , \quad 
g_\lambda e_3 = (1 / \lambda) e_3 . 
\end{align}
We thus obtain  
\begin{align}
\begin{split}
[g_{\lambda} e_1, g_{\lambda} e_2 ] & = [e_1, e_2] = e_2+e_3 = g_{\lambda} e_2 + \lambda g_{\lambda} e_3, \\
[g_{\lambda} e_1, g_{\lambda} e_3 ] & = [e_1, (1/\lambda )e_3] = (1/\lambda ) e_3 = g_{\lambda} e_3, \\ 
[g_{\lambda} e_2, g_{\lambda} e_3 ] & = [e_2, (1/\lambda )e_3] = 0 . 
\end{split}
\end{align}
Therefore, 
by applying $\varphi \in \Aut$ to the both sides of these equations, 
we obtain 
\begin{align}
\begin{split}
[x_1, x_2] & = [\varphi g_{\lambda} e_1, \varphi g_{\lambda} e_2 ] = \varphi [ g_{\lambda} e_1, g_{\lambda} e_2 ] = x_2 + \lambda x_3, \\
[x_1, x_3] & = [\varphi g_{\lambda} e_1, \varphi g_{\lambda} e_3 ] = \varphi [ g_{\lambda} e_1, g_{\lambda} e_3 ] = x_3, \\
[x_2, x_3] & = [\varphi g_{\lambda} e_2, \varphi g_{\lambda} e_3 ] = \varphi [ g_{\lambda} e_2, g_{\lambda} e_3 ] = 0 . 
\end{split}
\end{align}
This completes the proof of the first assertion. 
We show the second assertion. 
Lemma~\ref{der} yields that, for every $D \in \Der$, 
the matrix expression of $D$ with respect to $\{ e_1, e_2, e_3 \}$ is given by 
\begin{align} 
D = \left( 
\begin{array}{ccc}
0 & 0 & 0 \\
x_{21} & x_{22} & 0 \\
x_{31} & x_{32} & x_{22} 
\end{array}
\right) . 
\end{align}
A direct calculation shows that
\begin{align} 
g_{\lambda}^{-1} \left( 
\begin{array}{ccc}
0 & 0 & 0 \\
x_{21} & x_{22} & 0 \\
x_{31} & x_{32} & x_{22} 
\end{array}
\right) g_{\lambda} 
= \left( 
\begin{array}{ccc}
0 & 0 & 0 \\
x_{21} & x_{22} & 0 \\
\lambda x_{31} & \lambda x_{32} & x_{22} 
\end{array}
\right) . 
\end{align}
Note that $\lambda x_{31}$ and $\lambda x_{32}$ can take any real numbers, 
and are independent of the other components. 
Therefore, by Theorem~\ref{MTT} (2), 
one can obtain the matrix expression of $\Der$ 
with respect to $\{ x_1 , x_2 , x_3 \}$. 
This completes the proof of the second assertion. 
\end{proof}

By applying the Milnor-type theorem, 
Proposition~\ref{M1}, 
we prove that $\mathfrak{r}_3$ does not admit solvsolitons. 

\begin{prop}\label{S1}
The Lie algebra $\g = \mathfrak{r}_3$ does not admit solvsolitons. 
\end{prop}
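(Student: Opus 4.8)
The plan is to combine the Milnor-type normal form of Proposition~\ref{M1} with the Ricci formula of Lemma~\ref{ric}, and then read off the solvsoliton condition directly from the matrix entries. Since being a solvsoliton is preserved under isometry and scaling (Proposition~\ref{prop:soliton-pm}), it suffices to test the condition on the distinguished frame produced by Proposition~\ref{M1} rather than on an arbitrary representative.

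First I would fix an arbitrary inner product $\naiseki$ on $\mathfrak{r}_3$ and invoke Proposition~\ref{M1} to obtain $\lambda > 0$, $k > 0$, and an orthonormal basis $\{x_1, x_2, x_3\}$ with respect to $k\naiseki$ satisfying $[x_1,x_2] = x_2 + \lambda x_3$ and $[x_1,x_3] = x_3$. This is precisely the situation of Lemma~\ref{ric} with $a = 1$, $b = \lambda$, $c = 0$, $d = 1$. Substituting these values yields the Ricci operator of $k\naiseki$ in the basis $\{x_1,x_2,x_3\}$; the relevant feature is that its $(2,3)$- and $(3,2)$-entries both equal $-\lambda$, while its $(2,2)$- and $(3,3)$-entries are $-(2+\lambda^2/2)$ and $-(2-\lambda^2/2)$.

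Next I would compare this with the general shape of $\R \oplus \Der$ expressed in the same basis. By the second assertion of Proposition~\ref{M1}, the matrix form of $\Der$ with respect to $\{x_1,x_2,x_3\}$ is the standard lower-triangular one, so every matrix $c\cdot\mathrm{id} + D$ with $D \in \Der$ has vanishing $(2,3)$-entry and coincident $(2,2)$- and $(3,3)$-entries. The solvsoliton condition $\mathrm{Ric}_{k\naiseki} = c\cdot\mathrm{id} + D$ would therefore force $-\lambda = 0$, contradicting $\lambda > 0$ (equivalently, one could observe that the two diagonal entries differ by $\lambda^2 \neq 0$). Hence $k\naiseki$, and therefore $\naiseki$, is not a solvsoliton, and since $\naiseki$ was arbitrary, $\mathfrak{r}_3$ admits none.

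The computation is entirely routine once Lemma~\ref{ric} is available, and there is no genuine obstacle beyond bookkeeping. The only point demanding care is to ensure that the Ricci operator, the scaling, and the matrix form of $\Der$ are all referred to the \emph{same} basis $\{x_1,x_2,x_3\}$; this is exactly what the two assertions of Proposition~\ref{M1} supply together, and it is what turns the single off-diagonal entry $-\lambda$ into an honest obstruction rather than a basis-dependent artifact.
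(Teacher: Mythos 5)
Your proposal is correct and follows essentially the same route as the paper's own proof: apply the Milnor-type theorem (Proposition~\ref{M1}) to get the normalized bracket relations, compute $\mathrm{Ric}$ via Lemma~\ref{ric}, and observe that the nonzero $(2,3)$-entry $-\lambda$ is incompatible with the matrix form of $\mathbb{R} \oplus \mathrm{Der}(\g)$ in the same basis. The only cosmetic difference is that the paper normalizes $k=1$ at the outset, while you keep $k$ and invoke Proposition~\ref{prop:soliton-pm} at the end; your additional observation about the mismatched diagonal entries is a valid alternative obstruction not used in the paper.
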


\begin{proof}
Take any inner product $\naiseki$ on $\g$. 
We show that this is not a solvsoliton. 
By Proposition~\ref{M1}, 
there exist $\lambda > 0$, $k > 0$, 
and an orthonormal basis $\{ x_1, x_2, x_3\}$ with respect to $k \naiseki$ 
such that the bracket relations are given by 
\begin{align}
[x_1,x_2] = x_2 + \lambda x_3 , \quad [x_1,x_3] = x_3 . 
\end{align}
We can assume $k=1$ without loss of generality, 
since solvsolitons are preserved by scaling.  
Then, from Lemma~\ref{ric}, 
the matrix expression of $\mathrm{Ric}_{\naiseki}$ 
with respect to the orthonormal basis $\{ x_1 , x_2 , x_3 \}$ is given by  
\begin{align}
\mathrm{Ric}_{\naiseki} = - \left( 
\begin{array}{ccc}
2+(\lambda^2 /2) & 0 \\ 
0 & 2+(\lambda^2 /2) & \lambda \\
0 & \lambda & 2-(\lambda^2 /2) 
\end{array}
\right) . 
\end{align} 
On the other hand, by Proposition~\ref{M1}, 
one knows the matrix expression of $\Der$ 
with respect to $\{ x_1 , x_2 , x_3 \}$. 
By looking at the $(2,3)$-component, we have 
\begin{align}
\mathrm{Ric}_{\naiseki} \not \in \D . 
\end{align}
This proves that $\naiseki$ is not a solvsoliton. 
\end{proof}

\subsection{Case of $\g=\mathfrak{r}_{3, a}$ ($-1 \leq a < 1$)}\label{4.5}

In this subsection, we classify solvsolitons on $\g = \mathfrak{r}_{3, a}$. 
Throughout this subsection, we fix $a$ satisfying $-1 \leq a < 1$. 
Recall that, for the canonical basis 
$\{ e_1, e_2, e_3 \}$ of $\mathfrak{r}_{3, a}$, 
the bracket relations are given by 
\begin{align}
[e_1, e_2]  = e_2, \quad [e_1, e_3] = a e_3 . 
\end{align}

\begin{prop}\label{M2}
For every inner product $\naiseki$ on $\g = \mathfrak{r}_{3, a}$, 
there exist $\lambda \in \mathbb{R}$, $k > 0$, 
and an orthonormal basis $\{x_1, x_2, x_3\}$ with respect to $k \naiseki$ 
such that the bracket relations are given by 
\begin{align*}
[x_1 , x_2] = x_2 + \lambda (a-1)x_3 , \quad [x_1 , x_3] = a x_3 . 
\end{align*} 
Furthermore, the matrix expression of $\Der$ 
with respect to $\{x_1, x_2, x_3\}$ coincides with 
\begin{align*} 
\left\{ \left( 
\begin{array}{ccc} 
0 & 0 & 0 \\ 
x_{21} & x_{22} & 0 \\ 
x_{31} & \lambda (x_{33} - x_{22}) & x_{33} 
\end{array} 
\right) \mid x_{21}, x_{22}, x_{31}, x_{33} \in \R \right\} . 
\end{align*} 
\end{prop}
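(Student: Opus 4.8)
The plan is to mirror the proof of Proposition~\ref{M1}, using instead the set of representatives obtained in Proposition~\ref{PM2}. Write a typical element of that set $U$ as
\begin{align*}
g_\lambda := \left( \begin{array}{ccc} 1 & 0 & 0 \\ 0 & 1 & 0 \\ 0 & \lambda & 1 \end{array} \right) \quad (\lambda \in \R).
\end{align*}
Given an arbitrary inner product $\naiseki$ on $\mathfrak{r}_{3,a}$, I would apply Theorem~\ref{MTT} to obtain some $g_\lambda \in U$, a scalar $k > 0$, and $\varphi \in \Aut$ such that, setting $x_i := \varphi g_\lambda e_i$, the basis $\{x_1, x_2, x_3\}$ is orthonormal with respect to $k \naiseki$. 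The whole statement then reduces to two explicit computations at the Lie-algebra level.

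For the first assertion I would record the images $g_\lambda e_1 = e_1$, $g_\lambda e_2 = e_2 + \lambda e_3$, and $g_\lambda e_3 = e_3$, and compute the brackets $[g_\lambda e_i, g_\lambda e_j]$ from $[e_1, e_2] = e_2$ and $[e_1, e_3] = a e_3$. The only step needing care is $[g_\lambda e_1, g_\lambda e_2] = e_2 + \lambda a e_3$: rewriting this in the basis $\{g_\lambda e_i\}$ through $e_2 = g_\lambda e_2 - \lambda g_\lambda e_3$ and $e_3 = g_\lambda e_3$ produces the coefficient $\lambda(a-1)$ in front of $g_\lambda e_3$, which is precisely the claimed structure constant. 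Because $\varphi$ is an automorphism, applying it to each bracket identity transports the relations verbatim to the $x_i$, yielding $[x_1, x_2] = x_2 + \lambda(a-1) x_3$, $[x_1, x_3] = a x_3$, and $[x_2, x_3] = 0$.

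For the second assertion I would invoke Theorem~\ref{MTT}(2) and conjugate the matrix form of $\Der$ given in Lemma~\ref{der}(2) by $g_\lambda$. A direct calculation of $g_\lambda^{-1} D g_\lambda$ yields the $(3,2)$-entry $\lambda(x_{33} - x_{22})$, as claimed, together with the $(3,1)$-entry $x_{31} - \lambda x_{21}$. The key observation, exactly as in Proposition~\ref{M1}, is that this $(3,1)$-entry still ranges over all of $\R$ independently of the other entries, so relabelling it as a free parameter $x_{31}$ recovers the asserted matrix expression of $\Der$ relative to $\{x_1, x_2, x_3\}$.

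Since both computations are of the same routine type as in Proposition~\ref{M1}, I expect no genuine obstacle. The only points demanding attention are the bookkeeping of the change of basis, so that the coefficient $\lambda(a-1)$ and the entry $\lambda(x_{33} - x_{22})$ emerge correctly, and the verification that conjugation by $g_\lambda$ preserves the four-dimensional freedom of $\Der$ rather than collapsing it.
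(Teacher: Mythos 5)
Your proposal is correct and follows essentially the same route as the paper's own proof: both apply Theorem~\ref{MTT} with the representatives $g_\lambda$ from Proposition~\ref{PM2}, derive the structure constant $\lambda(a-1)$ by rewriting $e_2 = g_\lambda e_2 - \lambda g_\lambda e_3$, and obtain the matrix form of $\Der$ by conjugating Lemma~\ref{der}(2) by $g_\lambda$, noting that the $(3,1)$-entry $x_{31}-\lambda x_{21}$ remains a free parameter. No gaps; the computations you outline are exactly those carried out in the paper.
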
 

\begin{proof} 
The proof is similar to that of Proposition~\ref{M1}. 
Take any inner product $\naiseki$ on $\mathfrak{r}_{3, a}$. 
By Proposition~\ref{PM2}, the following $U$ is a set of representatives of $\PM$: 
\begin{align}
U := \left\{ g_\lambda := \left(
\begin{array}{ccc}
1 & 0 & 0 \\
0 & 1 & 0 \\
0 & \lambda & 1
\end{array}
\right) \mid \lambda \in \mathbb{R} \right\} . 
\end{align}
By Theorem~\ref{MTT}, 
there exist $g_{\lambda} \in U$, $k>0$, and $\varphi \in \Aut$ 
such that 
\begin{align} 
(x_1 , x_2 , x_3) := (\varphi g_{\lambda} e_1,  \varphi g_{\lambda} e_2, \varphi g_{\lambda} e_3) 
\end{align} 
forms an orthonormal basis with respect to $k \naiseki$. 
We have only to check the bracket relations. 
By definition, we have 
\begin{align}
g_\lambda e_1 = e_1 , \quad 
g_\lambda e_2 = e_2 + \lambda e_3 , \quad 
g_\lambda e_3 = e_3 . 
\end{align}
One can thus calculate that 
\begin{align}
\begin{split}
[g_{\lambda} e_1, g_{\lambda} e_2 ] 
& = [e_1, e_2 + \lambda e_3] 
= e_2+ a \lambda e_3 
= (g_{\lambda} e_2 - \lambda g_{\lambda} e_3) + a \lambda g_{\lambda} e_3 \\ 
&= g_{\lambda} e_2 + \lambda (a-1) e_3, \\ 
[g_{\lambda} e_1, g_{\lambda} e_3 ] 
& = [e_1, e_3] = a e_3 = a g_{\lambda} e_3, \\ 
[g_{\lambda} e_2, g_{\lambda} e_3 ] 
& = [e_2 + \lambda e_3, e_3] = 0 . 
\end{split}
\end{align}
By applying $\varphi \in \Aut$, one completes the proof of the first assertion. 
The second assertion follows from Lemma~\ref{der} and Theorem~\ref{MTT}. 
In fact, one has 
\begin{align}
g_\lambda^{-1} \left(
\begin{array}{ccc}
0 & 0 & 0 \\
x_{21} & x_{22} & 0 \\
x_{31} & 0 & x_{33}
\end{array}
\right) g_\lambda = \left( 
\begin{array}{ccc} 
0 & 0 & 0 \\ 
x_{21} & x_{22} & 0 \\ 
- \lambda x_{21} + x_{31} & \lambda (x_{33} - x_{22}) & x_{33} 
\end{array} 
\right) . 
\end{align}
This completes the proof, 
since $- \lambda x_{21} + x_{31}$ can take any real number and 
is independent of the other components. 
\end{proof}

By applying the Milnor-type theorem, 
Proposition~\ref{M2}, 
one can classify solvsolitons on $\g = \mathfrak{r}_{3, a}$. 
Recall that $\naiseki_0$ is the inner product on $\g$ so that 
the canonical basis $\{ e_1, e_2, e_3 \}$ is orthonormal.

\begin{prop}\label{S2}
An inner product $\naiseki$ on $\g=\mathfrak{r}_{3, a}$ 
is a solvsoliton if and only if $[\naiseki] = [\naiseki_0]$.
\end{prop}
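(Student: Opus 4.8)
The plan is to collapse the whole problem onto the single real parameter $\lambda$ produced by the Milnor-type theorem, Proposition~\ref{M2}, and to show that the solvsoliton equation holds precisely when $\lambda = 0$, which is exactly the class of $\naiseki_0$. The easy direction comes almost for free: in the set of representatives of Proposition~\ref{PM2} the matrix $g_0$ is the identity, so $\naiseki_0$ is the $\lambda = 0$ point, and once I verify that $\naiseki_0$ itself is a solvsoliton, Proposition~\ref{prop:soliton-pm} immediately upgrades this to ``$[\naiseki] = [\naiseki_0] \Rightarrow \naiseki$ is a solvsoliton''. For the converse I would start from an arbitrary $\naiseki$, apply Proposition~\ref{M2} to get $\lambda \in \R$, a scaling $k>0$ (which I may normalize to $k=1$, since solvsolitons are preserved by scaling), and an orthonormal basis $\{x_1,x_2,x_3\}$ with $[x_1,x_2] = x_2 + \lambda(a-1)x_3$ and $[x_1,x_3] = a x_3$. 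This same $\lambda$ satisfies $[\naiseki] = [g_\lambda.\naiseki_0]$, so the entire task reduces to proving that the solvsoliton condition forces $\lambda = 0$.

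Next I would compute $\mathrm{Ric}_{\naiseki}$ in this frame by feeding the coefficients $(1,\ \lambda(a-1),\ 0,\ a)$ into Lemma~\ref{ric}; the result is a symmetric matrix that is diagonal apart from equal off-diagonal entries $-a\lambda(a-1)$ in the $(2,3)$- and $(3,2)$-slots. The decisive comparison is then against $\D = \R \oplus \Der$, using the explicit shape of $\Der$ recorded in Proposition~\ref{M2}: every element of $c\,\mathrm{id} + \Der$ has vanishing $(2,3)$-entry, while its $(2,2)$- and $(3,3)$-entries are free and its $(3,2)$-entry is exactly $\lambda$ times the difference of the diagonal parts. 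Matching $\mathrm{Ric}_{\naiseki} \in \D$ entrywise therefore yields two constraints: the $(2,3)$-equation $a\lambda(a-1)=0$, and the $(3,2)$-equation $\lambda(r_3-r_2)=0$, where $r_3-r_2$ is the difference of the relevant diagonal Ricci eigenvalues, which a short computation gives as $(1-a^2)+\lambda^2(a-1)^2$.

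The $(2,3)$-equation is the conceptually cleanest obstruction and already forces $\lambda = 0$ whenever $a\neq 0$, since $a-1\neq 0$ on the whole range $-1\le a<1$. The step I expect to be the main obstacle is precisely the borderline value $a=0$, where the $(2,3)$-entry vanishes identically and carries no information. Here I would invoke the $(3,2)$-equation instead: because $r_3-r_2 = (1-a^2)+\lambda^2(a-1)^2$ is a sum of nonnegative terms whose second summand is strictly positive once $\lambda\neq 0$ (using $a\neq 1$), the relation $\lambda(r_3-r_2)=0$ forces $\lambda=0$ uniformly in $a$ — so in fact the $(3,2)$-entry alone closes every case, with $a=0$ no longer special. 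Finally, substituting $\lambda=0$ makes $\mathrm{Ric}_{\naiseki}$ diagonal, and one reads off $c$ and the diagonal derivation $D\in\Der$ realizing $\mathrm{Ric}_{\naiseki}=c\,\mathrm{id}+D$; this confirms that $\naiseki_0$ is a solvsoliton and completes the easy direction, giving the stated equivalence $\naiseki \text{ solvsoliton} \iff [\naiseki]=[\naiseki_0]$.
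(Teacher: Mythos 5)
Your proof is correct and takes essentially the same route as the paper: both directions rest on the Milnor-type theorem (Proposition~\ref{M2}), the Ricci computation of Lemma~\ref{ric}, and an entrywise comparison of $\mathrm{Ric}_{\naiseki}$ with $\D$ in the $(2,3)$- and $(3,2)$-slots. The only difference is cosmetic: where the paper first gets $\lambda a(a-1)=0$ and then kills the residual case $a=0$ by a separate contradiction in the $(3,2)$-entry, you note that $r_3-r_2=(1-a^2)+\lambda^2(a-1)^2$ is strictly positive whenever $\lambda\neq 0$ (since $a<1$), so the $(3,2)$-constraint forces $\lambda=0$ uniformly in $a$.
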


\begin{proof} 
First of all, we show the ``if''-part. 
We have only to show that $\naiseki_0$ is a solvsoliton. 
By Lemma~\ref{ric}, one knows 
\begin{align} 
\mathrm{Ric}_{\naiseki_0} = - \left( 
\begin{array}{ccc}
1+a^2 & 0 & 0 \\ 
0 & 1+a & 0 \\
0 & 0 & a(1+a) 
\end{array}
\right) , 
\end{align}
One also knows by Lemma~\ref{der} that 
\begin{align}
\D = \left\{ \left(
\begin{array}{ccc}
x_{11} & 0 & 0 \\
x_{21} & x_{22} & 0 \\
x_{31} & 0 & x_{33}
\end{array} 
\right) 
\mid x_{11}, x_{21}, x_{22}, x_{31}, x_{33} \in \R 
\right\} . 
\end{align}
Then we have $\mathrm{Ric}_{\naiseki_0} \in \D$, that is, 
$\naiseki_0$ is a solvsoliton.  

We show the ``only if''-part. 
Take any inner product $\naiseki$ 
on $\g=\mathfrak{r}_{3, a}$, and assume that it is a solvsoliton. 
Proposition~\ref{M2} yields that 
there exist $\lambda \in \mathbb{R}$, $k > 0$, 
and an orthonormal basis $\{x_1, x_2, x_3\}$ with respect to $k \naiseki$ 
such that the bracket relations are given by 
\begin{align}
[x_1 , x_2] = x_2 + \lambda (a-1)x_3 , \quad [x_1 , x_3] = a x_3 . 
\end{align}
We can assume $k = 1$ without loss of generality. 
Hence $\{ x_1, x_2, x_3 \}$ is orthonormal. 
For simplicity of the notation, we put 
\begin{align} 
T := (1/2) \lambda^2 (a-1)^2 . 
\end{align} 
Then, from Lemma~\ref{ric}, one obtains the matrix expressions of $\mathrm{Ric}_{\naiseki}$ 
with respect to the basis $\{ x_1, x_2, x_3 \}$ as follows: 
\begin{align}\label{R2}
\mathrm{Ric}_{\naiseki} = - \left( 
\begin{array}{ccc}
1+a^2+T & 0 & 0 \\ 
0 & 1+a+T & \lambda a(a-1) \\
0 & \lambda a(a-1) & a+a^2-T 
\end{array}
\right) .  
\end{align}
On the other hand, 
Proposition~\ref{M2} gives the matrix expression 
with respect to $\{ x_1 , x_2 , x_3 \}$ as follows: 
\begin{align}
\label{eq:R+Der2}
\D = 
\left\{ \left(
\begin{array}{ccc}
x_{11} & 0 & 0 \\
x_{21} & x_{22} & 0 \\
x_{31} & \lambda (x_{33} - x_{22}) & x_{33} 
\end{array}
\right) \right\} . 
\end{align} 

We here claim that $\lambda = 0$. 
Recall that $\naiseki$ is a solvsoliton. Hence, by looking at the $(2,3)$-component, 
we have 
\begin{align}
\lambda a (a-1) = 0 . 
\end{align}
Assume that $\lambda \neq 0$. 
Since $-1 \leq a < 1$, one has $a = 0$. 
Then, by looking at the $(3,2)$-component, we have 
\begin{align}
0 = \lambda ( -T - (1+T))
= \lambda (-1 - \lambda^2) \neq 0 . 
\end{align}
This is a contradiction, which shows the claim. 
 
Since $\lambda = 0$, one can see that 
$\{ e_1 , e_2 , e_3 \}$ and $\{ x_1 , x_2 , x_3 \}$ have the same bracket relations. 
Thus, a linear map $F : \g \to \g$ satisfying 
\begin{align}
F(e_i) = x_i \quad (i = 1,2,3) 
\end{align}
gives an isometry from $(\g , \naiseki_0)$ onto $(\g , \naiseki)$. 
This proves $[\naiseki] = [\naiseki_0]$. 
\end{proof}

\subsection{Case of $\g=\mathfrak{r}^{\prime}_{3, a}$ ($a \geq 0$)}\label{4.6}

In this subsection, we classify solvsolitons on 
$\g = \mathfrak{r}^{\prime}_{3, a}$. 
Throughout this subsection, we fix $a$ satisfying $a \geq 0$. 
Recall that, for the canonical basis $\{ e_1, e_2, e_3 \}$, 
the bracket relations are given by 
\begin{align}
[e_1, e_2] = a e_2 - e_3 , \quad [e_1, e_3] = e_2 + a e_3 . 
\end{align}

\begin{prop}\label{M3}
For every inner product $\naiseki$ on $\g=\mathfrak{r}^{\prime}_{3, a}$, 
there exist $\lambda \geq 1$, $k > 0$, 
and an orthonormal basis $\{x_1, x_2, x_3\}$ with respect to $k \naiseki$ 
such that the bracket relations are given by 
\begin{align}
[x_1,x_2] = a x_2 - \lambda x_3 , \quad [x_1,x_3] = (1/\lambda) x_2 + a x_3 . 
\end{align} 
Furthermore, the matrix expression of $\Der$ 
with respect to $\{x_1, x_2, x_3\}$ coincides with 
\begin{align*} 
\left\{ \left( 
\begin{array}{ccc} 
0 & 0 & 0 \\
x_{21} & x_{22} & x_{23} \\ 
x_{31} &  -\lambda^2 x_{23} & x_{22} 
\end{array} 
\right) 
\mid x_{21}, x_{22}, x_{23}, x_{31} \in \R  
\right\} . 
\end{align*}  
\end{prop}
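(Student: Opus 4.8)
The plan is to follow verbatim the template already established in the proofs of Propositions~\ref{M1} and \ref{M2}. First I would invoke Proposition~\ref{PM3}, which furnishes the set of representatives $U = \{ g_\lambda \mid \lambda \geq 1 \}$ with $g_\lambda$ the diagonal matrix $\mathrm{diag}(1,1,1/\lambda)$, and then apply Theorem~\ref{MTT} to produce $g_\lambda \in U$, a scalar $k>0$, and $\varphi \in \Aut$ such that $(x_1,x_2,x_3) := (\varphi g_\lambda e_1, \varphi g_\lambda e_2, \varphi g_\lambda e_3)$ is orthonormal with respect to $k \naiseki$. This reduces the first assertion to computing the bracket relations among the $g_\lambda e_i$ and then pushing them through the automorphism $\varphi$.

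For the bracket computation I would use $g_\lambda e_1 = e_1$, $g_\lambda e_2 = e_2$, and $g_\lambda e_3 = (1/\lambda)e_3$ together with the canonical relations $[e_1,e_2] = a e_2 - e_3$ and $[e_1,e_3] = e_2 + a e_3$. Rewriting $e_3 = \lambda\, g_\lambda e_3$ gives $[g_\lambda e_1, g_\lambda e_2] = a\, g_\lambda e_2 - \lambda\, g_\lambda e_3$, while dividing the second relation by $\lambda$ gives $[g_\lambda e_1, g_\lambda e_3] = (1/\lambda)\, g_\lambda e_2 + a\, g_\lambda e_3$; clearly $[g_\lambda e_2, g_\lambda e_3] = 0$. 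Since $\varphi$ preserves brackets, applying it to both sides yields exactly $[x_1,x_2] = a x_2 - \lambda x_3$ and $[x_1,x_3] = (1/\lambda) x_2 + a x_3$, which is the claimed first assertion.

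For the second assertion I would appeal to Theorem~\ref{MTT}(2), which identifies the matrix expression of $\Der$ relative to $\{x_1,x_2,x_3\}$ with $\{ g_\lambda^{-1} D g_\lambda \mid D \in \Der \}$. Taking the matrix form of $\Der$ from Lemma~\ref{der} and conjugating by the diagonal $g_\lambda$, I would note that conjugation by $\mathrm{diag}(1,1,1/\lambda)$ multiplies the third row by $\lambda$ and the third column by $1/\lambda$; thus the entry $-x_{23}$ in the $(2,3)$-slot becomes $-x_{23}/\lambda$ and the entry $x_{23}$ in the $(3,2)$-slot becomes $\lambda x_{23}$, while the two diagonal entries $x_{22}$ are unchanged and the first-column entries merely rescale. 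After renaming the free parameters, the new $(3,2)$-entry equals $-\lambda^2$ times the new $(2,3)$-entry, producing precisely the stated matrix set.

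The computations are entirely routine linear algebra, so I do not expect a genuine obstacle; the one point demanding attention is the bookkeeping in the conjugation step, namely verifying that the asymmetric coefficient $-\lambda^2$ in the $(3,2)$-slot emerges correctly from the interplay between the skew structure of $\Der$ and the non-orthogonal scaling by $g_\lambda$, and confirming that the reparametrization is a bijection of $\R^4$ onto itself. The latter is guaranteed because $\lambda \geq 1 > 0$, so each rescaled parameter still ranges over all of $\R$ independently of the others, which is exactly what makes the displayed set of derivations come out correctly.
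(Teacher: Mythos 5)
Your proposal is correct and follows exactly the paper's own proof: both invoke Proposition~\ref{PM3} and Theorem~\ref{MTT} to produce the orthonormal basis $\{\varphi g_\lambda e_i\}$, compute the brackets via $g_\lambda e_3 = (1/\lambda)e_3$, and obtain the derivation algebra by conjugating the matrices of Lemma~\ref{der} by $g_\lambda$ and renaming parameters (in particular, your bookkeeping giving the $(3,2)$-entry as $-\lambda^2$ times the new $(2,3)$-entry matches the paper's substitution $-(1/\lambda)x_{23} \mapsto x_{23}$, $\lambda x_{31} \mapsto x_{31}$). No gaps.
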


\begin{proof}
The proof is similar to that of Proposition~\ref{M1}. 
Take any inner product $\naiseki$ on $\mathfrak{r}^{\prime}_{3, a}$. 
By Proposition~\ref{PM3}, the following $U$ is a set of representatives of $\PM$: 
\begin{align}
U := \left\{ g_\lambda := \left(
\begin{array}{ccc}
1 & 0 & 0 \\
0 & 1 & 0 \\
0 & 0 & 1/\lambda
\end{array}
\right) \mid \lambda \geq 1 \right\} . 
\end{align}
By Theorem~\ref{MTT}, 
there exist $g_{\lambda} \in U$, $k>0$, and $\varphi \in \Aut$ 
such that 
\begin{align} 
(x_1 , x_2 , x_3) := (\varphi g_{\lambda} e_1,  \varphi g_{\lambda} e_2, \varphi g_{\lambda} e_3) 
\end{align} 
forms an orthonormal basis with respect to $k \naiseki$. 
We have only to check the bracket relations. 
By definition, we have 
\begin{align}
g_\lambda e_1 = e_1 , \quad 
g_\lambda e_2 = e_2 , \quad 
g_\lambda e_3 = (1/\lambda)e_3 . 
\end{align}
One can thus calculate that 
\begin{align}
\begin{split}
[g_{\lambda} e_1, g_{\lambda} e_2 ] 
& = [e_1, e_2] 
= a e_2 - e_3 
= a g_{\lambda} e_2 - \lambda g_{\lambda} e_3, \\ 
[g_{\lambda} e_1, g_{\lambda} e_3 ] & = [e_1, (1/\lambda )e_3] = (1/\lambda ) (e_2 + a e_3) = (1/\lambda ) g_{\lambda} e_2 + a g_{\lambda} e_3, \\ 
[g_{\lambda} e_2, g_{\lambda} e_3 ] & = [e_2, (1/\lambda )e_3] = 0.
\end{split}
\end{align}
By applying $\varphi \in \Aut$, one completes the proof of the first assertion. 
The second assertion follows from Lemma~\ref{der} and Theorem~\ref{MTT}. 
In fact, one has 
\begin{align}
g_\lambda^{-1} \left(
\begin{array}{ccc}
0 & 0 & 0 \\
x_{21} & x_{22} & -x_{23} \\
x_{31} & x_{23} & x_{22}
\end{array}
\right) g_\lambda = \left( 
\begin{array}{ccc} 
0 & 0 & 0 \\ 
x_{21} & x_{22} & -(1/\lambda)x_{23} \\ 
\lambda x_{31} & \lambda x_{23} & x_{22} 
\end{array} 
\right) . 
\end{align}
This completes the proof by changing $\lambda x_{31}$ to $x_{31}$, 
and $-(1/\lambda)x_{23}$ to $x_{23}$. 
\end{proof} 

By applying the Milnor-type theorem, Proposition~\ref{M3}, 
one can classify solvsolitons on 
$\g = \mathfrak{r}^{\prime}_{3, a}$.  
In fact, this admits a left-invariant Einstein metric. 
Recall that $\naiseki_0$ is the inner product 
so that the canonical basis $\{ e_1, e_2, e_3 \}$ is orthonormal.

\begin{prop}\label{S3} 
An inner product $\naiseki$ on 
$\g =  \mathfrak{r}^{\prime}_{3, a}$ is a solvsoliton 
if and only if $[\naiseki] = [\naiseki_0]$. 
In fact, $\naiseki_0$ is Einstein. 
\end{prop}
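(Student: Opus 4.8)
The plan is to mirror the two-step argument of Proposition~\ref{S2}, letting the Milnor-type theorem of Proposition~\ref{M3} supply a convenient frame and Lemma~\ref{ric} supply the Ricci operator. Since the relation ``isometric up to scaling'' preserves solvsolitons (Proposition~\ref{prop:soliton-pm}), the ``if''-part reduces to checking that $\naiseki_0$ is a solvsoliton, and the ``only if''-part reduces to showing that every solvsoliton $\naiseki$ satisfies $[\naiseki]=[\naiseki_0]$.

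For the ``if''-part, and simultaneously for the assertion that $\naiseki_0$ is Einstein, I would note that the canonical basis is precisely the frame of Proposition~\ref{M3} with $\lambda=1$. Applying Lemma~\ref{ric} to $[e_1,e_2]=ae_2-e_3$ and $[e_1,e_3]=e_2+ae_3$ (its constants being $a,-1,1,a$) should give $\mathrm{Ric}_{\naiseki_0}=-2a^2\,\mathrm{id}$. Being a scalar multiple of the identity, this lies in $\mathbb{R}\subset\D$, so $\naiseki_0$ is a solvsoliton, and in fact Einstein (flat when $a=0$). This settles the ``if''-direction.

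For the ``only if''-part I would take an arbitrary solvsoliton $\naiseki$ and invoke Proposition~\ref{M3} to obtain $\lambda\geq 1$, a scaling $k>0$ (which I set to $1$), an orthonormal basis $\{x_1,x_2,x_3\}$ with $[x_1,x_2]=ax_2-\lambda x_3$ and $[x_1,x_3]=(1/\lambda)x_2+ax_3$, and the displayed matrix form of $\Der$. Substituting $b=-\lambda$, $c=1/\lambda$ into Lemma~\ref{ric} produces a symmetric, block-diagonal $\mathrm{Ric}_{\naiseki}$ whose $(2,2)$- and $(3,3)$-entries are $-(2a^2+(1/2)(\lambda^2-\lambda^{-2}))$ and $-(2a^2-(1/2)(\lambda^2-\lambda^{-2}))$. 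The decisive observation is that, in this frame, every element of $\D$ has equal $(2,2)$- and $(3,3)$-entries; hence $\mathrm{Ric}_{\naiseki}\in\D$ forces these two entries to coincide, i.e. $\lambda^2-\lambda^{-2}=0$, and since $\lambda\geq 1$ this yields $\lambda=1$. (Membership in $\D$ also requires the relation $(3,2)=-\lambda^2\cdot(2,3)$, which against the symmetric Ricci operator gives $a(\lambda-1/\lambda)=0$; this is automatic once $\lambda=1$.) Finally, $\lambda=1$ makes $\{x_1,x_2,x_3\}$ obey the bracket relations of the canonical basis, so $e_i\mapsto x_i$ is an automorphism identifying $\naiseki_0$ with $\naiseki$ up to scaling, whence $[\naiseki]=[\naiseki_0]$. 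I expect no genuine obstacle here: granting Proposition~\ref{M3}, the argument is a short matrix comparison, and the only point needing care is verifying that the asymmetric shape of $\Der$ for $\mathfrak{r}^{\prime}_{3,a}$ truly pins $\lambda$ to $1$ rather than allowing an interval of values.
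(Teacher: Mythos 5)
Your proposal is correct and follows essentially the same route as the paper: both use Proposition~\ref{M3} to reduce to the frame with parameter $\lambda \geq 1$, compute $\mathrm{Ric}_{\naiseki}$ via Lemma~\ref{ric}, and force $\lambda = 1$ by comparing the $(2,2)$- and $(3,3)$-entries against the fact that every element of $\D$ in that frame has these two entries equal, concluding with the isometry $e_i \mapsto x_i$. Your extra remark about the $(3,2) = -\lambda^2 \cdot (2,3)$ constraint is harmless but not needed, exactly as you suspected.
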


\begin{proof}
The proof is similar to that of Proposition~\ref{S2}.
First of all, we show the ``if''-part. 
By Lemma~\ref{ric}, one knows 
\begin{align} 
\mathrm{Ric}_{\naiseki_0} = - \left( 
\begin{array}{ccc}
2 a^2 & 0 & 0 \\ 
0 & 2 a^2 & 0 \\
0 & 0 & 2 a^2 
\end{array}
\right) . 
\end{align}
This shows that $\naiseki_0$ is 
Einstein, and hence a solvsoliton. 
 
We show the ``only if''-part. 
Take any inner product $\naiseki$ 
on $\g=\mathfrak{r}^{\prime}_{3, a}$, and assume that it is a solvsoliton. 
Proposition~\ref{M3} yields that 
there exist $\lambda \geq 1$, $k > 0$, 
and an orthonormal basis $\{x_1, x_2, x_3\}$ with respect to $k \naiseki$ 
such that the bracket relations are given by 
\begin{align}
[x_1 , x_2] = a x_2 - \lambda x_3 , \quad [x_1 , x_3] = (1/\lambda) x_2 + a x_3 . 
\end{align}
We can assume $k = 1$ without loss of generality. 
Hence $\{ x_1, x_2, x_3 \}$ is orthonormal. 
For simplicity of the notation, we put 
\begin{align} 
S := \lambda -(1/\lambda ) . 
\end{align} 
Then, from Lemma~\ref{ric}, one obtains the matrix expressions of $\mathrm{Ric}_{\naiseki}$ 
with respect to the basis $\{ x_1, x_2, x_3 \}$ as follows: 
\begin{align}\label{R3}
\mathrm{Ric}_{\naiseki}=- \frac{1}{2}\left(
\begin{array}{ccc}
4 a^2+S^2 & 0 & 0 \\ 
0 & 4 a^2 + (\lambda^2 -(1/\lambda )^2) 
& -2a S \\
0 & -2a S & 4 a^2- (\lambda^2 -(1/\lambda )^2) 
\end{array}
\right) . 
\end{align}
On the other hand, 
Proposition~\ref{M3} gives the matrix expression 
with respect to $\{ x_1 , x_2 , x_3 \}$ as follows: 
\begin{align}
\label{eq:R+Der3}
\D = 
\left\{ \left(
\begin{array}{ccc}
x_{11} & 0 & 0 \\
x_{21} & x_{22} & x_{23} \\
x_{31} & -\lambda^2 x_{23} & x_{22} 
\end{array}
\right) \right\} . 
\end{align} 

We here show that $\lambda = 1$. 
Recall that $\naiseki$ is a solvsoliton. Hence, By looking at the $(2,2)$ and $(3,3)$-components, we have 
\begin{align}
4 a^2 + (\lambda^2 -(1/\lambda )^2) = 4 a^2 - (\lambda^2 -(1/\lambda )^2) . 
\end{align}
Since $\lambda \geq 1$, this yields that 
\begin{align}
\lambda = 1 . 
\end{align}
 
Since $\lambda = 1$, one can see that 
$\{ e_1 , e_2 , e_3 \}$ and $\{ x_1 , x_2 , x_3 \}$ have the same bracket relations. 
Thus, a linear map $F : \g \to \g$ satisfying 
\begin{align}
F(e_i) = x_i \quad (i = 1,2,3) 
\end{align}
gives an isometry from $(\g , \naiseki_0)$ onto $(\g , \naiseki)$. 
This proves $[\naiseki] = [\naiseki_0]$. 
\end{proof}

\section{The minimality of the corresponding submanifolds}

In this section, 
we study the actions of $\A$ and examine the minimality of its orbits, 
the corresponding submanifolds to left-invariant metrics. 
After some necessary preliminaries in Subsection~5.1, 
we study the cases of $\g = \mathfrak{r}_3$, 
$\mathfrak{r}_{3, a}$ ($-1 \leq a < 1$), 
and $\mathfrak{r}_{3, a}^{\prime}$ ($a \geq 0$)
in Subsections~5.2, 5.3, and 5.4, respectively. 
We have only to study these cases, 
since the actions of $\A$ is transitive for the remaining cases 
$\g=\h$ and $\mathfrak{r}_{3, 1}$. 

\subsection{Preliminary}

In this subsection, 
we review some of the standard facts on reductive homogeneous spaces 
and homogeneous submanifolds. 
We refer to \cite{And, Bes}. 

Let $U/K$ be a reductive homogeneous space with a reductive decomposition 
\begin{align}
\mathfrak{u} = \mathfrak{k} \oplus \mathfrak{m} . 
\end{align}
As in Subsection~2.1, 
denote by $\pi : U \rightarrow U/K$ the natural projection, 
and by $o := \pi(e)$ the origin of $U/K$. 
We identify $\m$ with the tangent space $\mathrm{T}_o (U/K)$ at $o$ by 
\begin{align}
\mathrm{d}\pi_e |_{\m} : \m \rightarrow \mathrm{T}_o (U/K) . 
\end{align}
In the following, we equip a $U$-invariant Riemannian metric $g$ on $U/K$. 

We here recall a formula for the Levi-Civita connection $\nabla$ of $g$. 
For any $X \in \mathfrak{u}$, we define the fundamental vector field 
$X^\ast$ on $U/K$ by 
\begin{align}
X_p^\ast=\frac{d}{dt}({\rm exp}tX).p|_{t=0} 
\quad (
\mbox{for $p \in U/K$}) . 
\end{align} 
Let $X, Y, Z \in \mathfrak{u}$. 
Then one knows 
\begin{align}\label{d}
X^\ast_o & = \mathrm{d}\pi_e (X) , \\ 
\label{seki}
[X^\ast, Y^\ast] & = -[X, Y]^\ast , \\ 
\label{conn}
2 g( \nabla _{X^\ast} Y^\ast, Z^\ast ) & = 
g( [X^\ast, Y^\ast], Z^\ast ) + g( [X^\ast, Z^\ast], Y^\ast ) + g( X^\ast, [Y^\ast, Z^\ast] ) . 
\end{align}

We now consider homogeneous submanifolds in $(U/K , g)$. 
Let $U^{\prime}$ be a Lie subgroup of $U$, 
and consider the orbit $U^\prime.o$ through the origin $o$. 
Let $\mathfrak{u}^{\prime}$ be the Lie algebra of $U^{\prime}$, 
and denote by $\naiseki$ the inner product on $\m$ corresponding to $g$.
We define
\begin{align}
\m^{\prime} := \mathrm{d} \pi_e (\mathfrak{u}^{\prime}) 
\cong \mathrm{T}_o (U^\prime.o) . 
\end{align}
Denote by 
$\m \ominus \m^{\prime}$ the orthogonal complement of $\m^{\prime}$ 
in $\m$ with respect to $\naiseki$. 
Then, the second fundamental form 
$h : \m^{\prime} \times \m^{\prime} \rightarrow \m \ominus \m^{\prime}$ of 
$U^{\prime}.o$ at $o$ is defined by 
\begin{align}
h(X^\ast_o, Y^\ast_o) := (\nabla _{X^\ast} Y^\ast - \nabla^{\prime} _{X^\ast} Y^\ast)_o 
\quad 
(\mbox{for $X,Y \in \mathfrak{u}^{\prime}$}),
\end{align}
where $\nabla^{\prime}$ is the Levi-Civita connection of $U^\prime.o$ 
with respect to the induced metric. 
Take $Z \in \mathfrak{u}$ satisfying 
$Z^\ast_o \in \m \ominus \m^{\prime}$. 
From (\ref{seki}) and (\ref{conn}), one obtains 
\begin{align}\label{sec}
\begin{split}
2 \langle  h(X^\ast_o, Y^\ast_o), Z^\ast_o \rangle 
&= \langle [Z, X]^\ast_o, Y^\ast_o \rangle + \langle X^\ast_o, [Z, Y]^\ast_o \rangle .
\end{split}
\end{align}
The mean curvature vector of $U^{\prime}.o$ at o is defined by 
\begin{align}\label{mcv}
H := - (1/k) \mathrm{tr} (h) = - (1/k)
\textstyle 
\sum h(E_i^{\prime}, E_i^{\prime}) , 
\end{align}
where $\{ E_i^{\prime} \}$ is an orthonormal basis of $\mathfrak{m}^{\prime}$, 
and $k$ is the dimension of $U^{\prime}.o$. 
We call $U^{\prime}.o$ \textit{minimal} 
if its mean curvature vector is equal to zero. 

In the following subsections, 
we will calculate the mean curvature vectors 
of the corresponding submanifolds in $\mathrm{GL}_3(\mathbb{R})/\mathrm{O}(3)$ 
with respect to the natural Riemannian metric (see Section~2). 
We will frequently use 
\begin{align}
\mathrm{d}\pi_e : \mathfrak{gl}_3(\mathrm{R}) \rightarrow \mathrm{sym}(3) : 
X \mapsto (1/2)(X+ {}^t X  ) . 
\end{align}

\subsection{Case of $\g=\mathfrak{r}_3$}

In this subsection, we study the case of $\g = \mathfrak{r}_3$. 
First of all, by direct calculations, 
one has 
\begin{align}
\label{aut1}
\Aut = \left\{ \left(
\begin{array}{ccc}
1 & 0 & 0 \\
x_{21} & x_{22} & 0 \\
x_{31} & x_{32} & x_{22} 
\end{array}
\right) \mid x_{22} \neq 0 \right\} . 
\end{align}
This easily yields that 
\begin{align}
\label{eq:aut1}
\A = \left\{ \left(
\begin{array}{ccc}
x_{11} & 0 & 0 \\
x_{21} & x_{22} & 0 \\
x_{31} & x_{32} & x_{22} 
\end{array}
\right) \mid x_{11} , x_{22} \neq 0 \right\} . 
\end{align} 
From Proposition \ref{PM1}, the expression of $\PM$ is given as follows: 
\begin{align}\label{epm1}
\displaystyle \PM = \left\{ [ g_{\lambda}.\naiseki _0 ] \mid g_{\lambda} = \left(
\begin{array}{ccc}
1 & 0 & 0 \\
0 & 1 & 0 \\
0 & 0 & 1/\lambda 
\end{array}
\right) , \ \lambda > 0 \right\} . 
\end{align} 
For any $\lambda > 0$, one can see that 
\begin{align}
\label{eq:normalize}
g_{\lambda}^{-1} 
(\A) 
g_{\lambda}=\A.
\end{align}
This is an easy observation, 
but very important to get the following lemma. 

\begin{lem}\label{action1}
Let $\g=\mathfrak{r}_3$. Then the action of $\A$ is of cohomogeneity one, 
and all orbits are isometrically congruent to each other. 
\end{lem}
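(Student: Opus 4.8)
The plan is to establish the two assertions of Lemma~\ref{action1} separately, relying on the explicit description of $\PM$ in \eqref{epm1} and the normalization property \eqref{eq:normalize}. First I would settle the cohomogeneity. Since $\PM$ is the orbit space of the $\A$-action on $\M = \mathrm{GL}_3(\R)/\mathrm{O}(3)$, the dimension of $\PM$ equals the codimension of the regular $\A$-orbits. From \eqref{epm1} the moduli space is parametrized by a single real parameter $\lambda > 0$, so $\PM$ is one-dimensional; hence the regular orbits have codimension one and the action is of cohomogeneity one. To complete this half I would verify that every orbit is regular, i.e. there are no singular orbits of strictly smaller dimension; this amounts to checking that the isotropy dimension is constant along the representatives $g_\lambda.\naiseki_0$, which follows from \eqref{eq:aut1} once one computes the stabilizer in $\A$.

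Next I would prove that all the orbits are isometrically congruent. This is where \eqref{eq:normalize} does the real work. The key observation is that each $g_\lambda$ lies in $\mathrm{GL}_3(\R)$ and therefore acts on $\M$ as an isometry of the natural $\mathrm{GL}_3(\R)$-invariant Riemannian metric. I would argue that the orbit through $g_\lambda.\naiseki_0$ can be mapped to the orbit through $g_\mu.\naiseki_0$ by a suitable isometry of $\M$ built from $g_\lambda, g_\mu$. Concretely, because $g_\lambda^{-1}(\A)g_\lambda = \A$ for every $\lambda>0$, left translation by $g_\lambda$ (an isometry of $\M$) carries the $\A$-orbit through $\naiseki_0$ onto the $\A$-orbit through $g_\lambda.\naiseki_0$: indeed $g_\lambda.(\A.\naiseki_0) = (g_\lambda \A g_\lambda^{-1}).(g_\lambda.\naiseki_0) = \A.(g_\lambda.\naiseki_0)$. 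Since all orbits arise this way and the map $g_\lambda$ is an isometry, any two orbits are related by an isometry of $\M$, which is precisely the meaning of being isometrically congruent.

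I expect the main obstacle to be the congruence statement rather than the cohomogeneity count. The cohomogeneity is essentially read off from the one-parameter family in \eqref{epm1}, but congruence requires being careful that the normalizing map $g_\lambda$ is genuinely an isometry of $\M$ and that the conjugation identity \eqref{eq:normalize} really transports one full orbit onto another, not just one point to another. I would therefore state cleanly that every element of $\mathrm{GL}_3(\R)$ acts isometrically on $\M$ (this is built into the construction of the natural metric in Subsection~2.1, where $\M = \mathrm{GL}_n(\R)/\mathrm{O}(n)$ is a symmetric space with $\mathrm{GL}_n(\R)$ acting by isometries), and then chain the computation
\begin{align}
g_\lambda.(\A.\naiseki_0) = (g_\lambda \A g_\lambda^{-1}).(g_\lambda.\naiseki_0) = \A.(g_\lambda.\naiseki_0),
\end{align}
using \eqref{eq:normalize} in the middle equality. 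This exhibits the orbit through $g_\lambda.\naiseki_0$ as the isometric image under $g_\lambda$ of the orbit through $\naiseki_0$, and running $\lambda$ over $(0,\infty)$ shows all orbits are mutually congruent. The remaining details are routine, so I would not belabor the explicit matrix manipulations.
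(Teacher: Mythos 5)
Your proposal is correct and takes essentially the same route as the paper: the congruence statement is proved exactly as in the paper's proof, by combining the conjugation invariance (\ref{eq:normalize}) with the fact that each $g_\lambda \in \mathrm{GL}_3(\mathbb{R})$ acts as an isometry of $\M$, so that $g_\lambda$ carries $\A.\naiseki_0$ onto $\A.(g_\lambda.\naiseki_0)$. The only difference is cosmetic: for the cohomogeneity, the paper directly computes $\dim \A = 5$ and $\dim (\A \cap \mathrm{O}(3)) = 0$ to see that the orbit through $\naiseki_0$ is a hypersurface, whereas you infer it from the one-parameter description of $\PM$ and then fall back on the same stabilizer computation --- note that the stabilizer computation (not the parametrization count alone, which only bounds $\dim \PM$ from above and does not by itself exclude transitivity) is what makes this half rigorous.
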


\begin{proof}
In order to prove the action of $\A$ is of cohomogeneity one, 
it is enough to show that the orbit through $\naiseki_0$ is of codimension one. 
From (\ref{eq:aut1}), 
it is easy to see that
\begin{align}
\dim \A = 5 , \quad 
\dim (\A \cap \mathrm{O}(3)) = 0. 
\end{align}
Therefore $\A.\naiseki_0$ has dimension $5$. 
This completes the proof, 
since the ambient space $\mathrm{GL}_3(\mathbb{R})/\mathrm{O}(3)$ 
has dimension $6$. 

Next we prove that all orbits are isometrically congruent to each other. 
Take any $\naiseki$ and $\naiseki^\prime$. 
By Proposition~\ref{PM1}, there exist $\lambda, \lambda^{\prime} > 0$ such that 
\begin{align}
\A.\naiseki = \A.(g_{\lambda}.\naiseki_0) ,\\
\A.\naiseki^{\prime} = \A.(g_{\lambda^{\prime}}.\naiseki_0) . 
\end{align}
We put $\mu := \lambda^{\prime} / \lambda >0$, 
and take $g_{\mu} \in \mathrm{GL}_3(\mathbb{R})$. 
Then (\ref{eq:normalize}) yields that 
\begin{align}
\begin{split}
g_{\mu} \A.\naiseki &= g_{\mu} \A.(g_\lambda.\naiseki_0) \\
& = g_{\mu} (g_{\mu}^{-1} \A g_{\mu}).(g_\lambda.\naiseki_0) \\
& = \A.(g_{\lambda^{\prime}}.\naiseki_0) \\
& = \A.\naiseki^{\prime} . 
\end{split}
\end{align}
Thus $g_\mu$ maps the first orbit onto the second one, 
which completes the proof.
\end{proof}

We refer to \cite{K-T} for actions all of whose orbits are 
isometrically congruent to each other. 
Our idea of the proof of Lemma~\ref{action1} 
comes from the arguments in \cite{K-T}. 

\begin{prop}
Let $\g = \mathfrak{r}_3$.  
Then the action of $\A$ 
on $\mathrm{GL}_3(\mathbb{R})/\mathrm{O}(3)$ 
has no minimal orbits. 
\end{prop}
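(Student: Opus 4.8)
The plan is to exploit Lemma~\ref{action1}: since all $\A$-orbits are isometrically congruent to each other and an ambient isometry carries a submanifold to one with the same second fundamental form, minimality is a congruence invariant, so it suffices to show that a \emph{single} orbit fails to be minimal. I would work with the (regular) orbit through the origin $o = \naiseki_0$ and show that its mean curvature vector at $o$ is nonzero, by a direct application of the formulas (\ref{sec}) and (\ref{mcv}) from Subsection~5.1.

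First I would pin down the tangent and normal spaces at $o$. The tangent space of $\A.\naiseki_0$ at $o$ is $\m^{\prime} = \mathrm{d}\pi_e(\D)$, where $\D$ is the Lie algebra given in (\ref{d1}) and $\mathrm{d}\pi_e(X) = (1/2)(X + {}^tX)$. A direct check shows that $\m^{\prime}$ is the $5$-dimensional subspace of $\mathrm{sym}(3)$ spanned by $E_{11}$, $E_{22}+E_{33}$, and the three symmetric off-diagonal matrices $E_{12}+E_{21}$, $E_{13}+E_{31}$, $E_{23}+E_{32}$. Hence the normal space $\m \ominus \m^{\prime}$ is one-dimensional, spanned by $Z_0 := E_{22} - E_{33}$; since $Z_0$ is symmetric we may take $Z := Z_0 \in \mathfrak{gl}_3(\R)$, so that $Z^{\ast}_o = Z_0 \in \m \ominus \m^{\prime}$.

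Next I would compute the mean curvature vector. Choosing an orthonormal basis $\{ E_i^{\prime} \}$ of $\m^{\prime}$ with explicit preimages $X_i \in \D$ under $\mathrm{d}\pi_e$, formula (\ref{sec}) with $X = Y = X_i$ reduces to $\langle h((X_i)^{\ast}_o, (X_i)^{\ast}_o), Z_0 \rangle = \langle [Z_0, X_i]^{\ast}_o, (X_i)^{\ast}_o \rangle$, where $[\,\cdot\,,\cdot\,]$ is the matrix commutator. The two diagonal basis directions commute with $Z_0$ and contribute nothing, while the off-diagonal generators are eigenvectors of $\mathrm{ad}(Z_0)$: one computes $[Z_0, E_{21}] = E_{21}$, $[Z_0, E_{31}] = -E_{31}$, and $[Z_0, E_{32}] = -2E_{32}$. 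After normalization each such direction contributes exactly its eigenvalue, so $\sum_i \langle h(E_i^{\prime}, E_i^{\prime}), Z_0 \rangle = 1 - 1 - 2 = -2$, whence by (\ref{mcv}) one gets $\langle H, Z_0 \rangle = -(1/5)(-2) = 2/5 \neq 0$. Therefore $H \neq 0$ and $\A.\naiseki_0$ is not minimal; by Lemma~\ref{action1}, no $\A$-orbit is minimal.

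I expect the main difficulty to be purely organizational: normalizing the basis of $\m^{\prime}$ correctly and tracking preimages under the non-injective map $\mathrm{d}\pi_e$, together with the commutator bookkeeping for $[Z_0, X_i]$. The conceptual point that makes the argument work is that the three off-diagonal directions are scaled \emph{asymmetrically} by $\mathrm{ad}(Z_0)$ (eigenvalues $1, -1, -2$ on $E_{21}, E_{31}, E_{32}$), so their contributions to the trace cannot cancel. This asymmetry is the geometric shadow of the obstruction $\mathrm{Ric}_{\naiseki} \notin \D$ found in Proposition~\ref{S1}.
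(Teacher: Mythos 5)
Your proof is correct and takes essentially the same route as the paper: reduce to the orbit through $\naiseki_0$ via Lemma~\ref{action1}, then compute the mean curvature there from (\ref{sec}) and (\ref{mcv}) using the same orthonormal tangent frame (up to normalization) and the same commutators $[E_{22}-E_{33},E_{21}]=E_{21}$, $[E_{22}-E_{33},E_{31}]=-E_{31}$, $[E_{22}-E_{33},E_{32}]=-2E_{32}$. The only cosmetic difference is that you pair $H$ against the unnormalized normal $E_{22}-E_{33}$ instead of the unit normal $A=(1/\sqrt{2})(E_{22}-E_{33})$, giving $\langle H, E_{22}-E_{33}\rangle = 2/5$ in place of the paper's $H=(\sqrt{2}/5)A$, which is the same nonvanishing conclusion.
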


\begin{proof}
Consider the action of $\A$ on $\mathrm{GL}_3(\mathbb{R})/\mathrm{O}(3)$. 
From Lemma \ref{action1}, 
all orbits are isometrically congruent to each other. 
Thus it is sufficient to prove that the orbit through the origin $\naiseki_0$ is not minimal. 
We calculate the mean curvature vector of $\A.\naiseki_0$. 
One can see from (\ref{d1}) that
\begin{align}
\mathfrak{u}^{\prime} & := \D 
= \left\{ \left(
\begin{array}{ccc}
x_{11} & 0 & 0 \\
x_{21} & x_{22} & 0 \\
x_{31} & x_{32} & x_{22} 
\end{array}
\right) \right\} , \\ 
\m^\prime & := 
\mathrm{d}\pi_e (\mathfrak{u}^{\prime})
= 
\left\{ 
\left(
\begin{array}{ccc}
x_{11} & x_{21} & x_{31} \\
x_{21} & x_{22} & x_{32} \\
x_{31} & x_{32} & x_{22} 
\end{array}
\right) \right\} . 
\end{align}
Let us denote by $E_{ij}$ the matrix whose $(i,j)$-entry is $1$ and others are $0$.
We define a basis $\{ X_1 , \ldots , X_5 \}$ of $\mathfrak{u}^\prime$ by 
\begin{align}
\begin{split}
&
X_1 := E_{11}, \quad 
X_2 := (1/\sqrt{2}) (E_{22}+E_{33}) , \\ 
&
X_3 := \sqrt{2} E_{21} , \quad 
X_4 := \sqrt{2} E_{31} , \quad 
X_5 := \sqrt{2} E_{32} . 
\end{split}
\end{align}
Furthermore we put 
\begin{align}
X_i^{\prime} := (X_i)_o^\ast = (1/2) (X_i + {}^t X_i) , 
\quad 
A := (1/\sqrt{2})(E_{22}-E_{33}). 
\end{align}  
Then $\{ X_1^{\prime} , \ldots , X_5^{\prime} \}$ is an orthonormal basis of $\m^{\prime}$, 
and $\{ A \}$ is an orthonormal basis of $\m \ominus \m^{\prime}$. 
Recall that the mean curvature vector $H$ is given by 
\begin{align}
H = -(1/5) 
\textstyle \sum h(X_i^\prime , X_i^\prime) , \quad 
\langle h(X_i^\prime , X_i^\prime) , A \rangle 
= \langle [A , X_i]_o^\ast , (X_i)_o^\ast \rangle . 
\end{align}
The bracket products $[A , X_i]$ satisfy 
\begin{align}
[A , X_1] = [A , X_2] = 0 , \ 
[A , X_3] = E_{21} , \ 
[A , X_4] = - E_{31} , \ 
[A , X_5] = -2 E_{32} . 
\end{align}
Therefore, one has 
\begin{align}
\begin{split}
\langle [A , X_3]_o^\ast , (X_3)_o^\ast \rangle 
& 
= \langle (1/2) (E_{21}+E_{12}) , (\sqrt{2}/2) (E_{21}+E_{12}) \rangle 
= \sqrt{2} / 2 , \\ 
\langle [A , X_4]_o^\ast , (X_4)_o^\ast \rangle 
& 
= \langle (1/2) (-E_{31}-E_{13}) , (\sqrt{2}/2) (E_{31}+E_{13}) \rangle 
= - \sqrt{2} / 2 , \\ 
\langle [A , X_5]_o^\ast , (X_5)_o^\ast \rangle 
& 
= \langle (-E_{32}-E_{23}) , (\sqrt{2}/2) (E_{32}+E_{23}) \rangle 
= - \sqrt{2} . 
\end{split}
\end{align}
This yields that 
\begin{align}
H=(\sqrt{2}/5)A \neq 0. 
\end{align}
Therefore, $\A.\naiseki_0$ is not minimal, 
which completes the proof. 
\end{proof}

\subsection{Case of $\g=\mathfrak{r}_{3,a}$ ($-1 \leq a < 1$)}

In this subsection, 
we study the case of $\g=\mathfrak{r}_{3,a}$. 
Throughout this subsection, we fix $a$ satisfying $-1 \leq a < 1$. 
Recall that, from Lemma \ref{der}, 
one has 
\begin{align}\label{de2}
\displaystyle \D = \left\{ \left(
\begin{array}{ccc}
x_{11} & 0 & 0 \\
x_{21} & x_{22} & 0 \\
x_{31} & 0 & x_{33} 
\end{array}
\right) 
\mid x_{11}, x_{21}, x_{22}, x_{31}, x_{33} \in \R 
\right\} . 
\end{align}
The expression of $\PM$ is given in Proposition~\ref{PM2} as follows: 
\begin{align}
\displaystyle \PM = \left\{ [ g_{\lambda}.\naiseki _0 ] \mid g_{\lambda} = \left(
\begin{array}{ccc}
1 & 0 & 0 \\
0 & 1 & 0 \\
0 & \lambda & 1
\end{array}
\right) , \ \lambda \in \mathbb{R} \right\} . 
\end{align}

\begin{prop}\label{Pr2}
Let $\g=\mathfrak{r}_{3, a}$. 
Then, we have
\begin{enumerate}
\item The action of $\A$ is of cohomogeneity one, and all orbits are hypersurfaces. 
\item $\A.\naiseki_0$ is the unique minimal orbit. 
\end{enumerate}
\end{prop}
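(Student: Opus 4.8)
The plan is to reduce the computation for every orbit to the origin $o=\naiseki_0$ by an ambient isometry, and then to read off both the codimension and the mean curvature from the Milnor-frame data of Proposition~\ref{M2}. Fix $\lambda\in\mathbb{R}$ and recall the representative $g_\lambda$ of Proposition~\ref{PM2}, so that the orbit in question is $\A.(g_\lambda.\naiseki_0)$. Since $g_\lambda^{-1}\in\mathrm{GL}_3(\mathbb{R})$ acts isometrically on $\M$ and sends $g_\lambda.\naiseki_0$ to $o$, it maps this orbit onto $(g_\lambda^{-1}\A g_\lambda).o$, whose tangent Lie algebra is $g_\lambda^{-1}\D g_\lambda$. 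This conjugate was already computed in Proposition~\ref{M2}: it is $\mathbb{R}$ together with the displayed $\lambda$-dependent matrix form of $\Der$. Applying $\mathrm{d}\pi_e:X\mapsto(1/2)(X+{}^tX)$ to this matrix form, I would identify the tangent space $\m'_\lambda\subset\mathrm{sym}(3)$ of the orbit at $o$ as the space of symmetric matrices whose $(2,3)$-entry equals $(\lambda/2)$ times the difference of the $(3,3)$- and $(2,2)$-entries. This is a single nontrivial linear condition on the $6$-dimensional space $\mathrm{sym}(3)$ for every $\lambda$, so $\m'_\lambda$ is $5$-dimensional. Hence every orbit is a hypersurface; in particular the action is of cohomogeneity one with no singular orbit, which proves (1).

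For (2) I would compute the mean curvature vector $H$ of $(g_\lambda^{-1}\A g_\lambda).o$ at $o$, using that minimality is preserved by the ambient isometry $g_\lambda^{-1}$. The normal space $\m\ominus\m'_\lambda$ is one-dimensional; imposing orthogonality against $\m'_\lambda$ with respect to $\langle X,Y\rangle=\mathrm{tr}(XY)$ singles out a spanning normal $N_0=N_0(\lambda)$ supported on the lower-right $2\times2$ block, with $\|N_0\|^2=2(1+\lambda^2)$. Next I would choose an orthonormal basis $\{E_1',\dots,E_5'\}$ of $\m'_\lambda$ adapted to the block structure, so that the $\lambda$-coupling between the $(2,3)$-entry and the diagonal is confined to a single vector $E_5'$, and lift each $E_i'$ to $\hat E_i\in\mathfrak{u}'_\lambda=g_\lambda^{-1}\D g_\lambda$ with $\mathrm{d}\pi_e(\hat E_i)=E_i'$. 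Formula~(\ref{sec}) then reduces the relevant components of the second fundamental form to $\langle h(E_i',E_i'),N_0\rangle=\langle\mathrm{d}\pi_e([N_0,\hat E_i]),E_i'\rangle$, and formula~(\ref{mcv}) gives $H$ through $\langle H,N_0\rangle=-(1/5)\sum_i\langle h(E_i',E_i'),N_0\rangle$. Carrying out the five commutator computations, I expect the contributions from $E_1'$ and $E_4'$ to vanish, those from $E_2'$ and $E_3'$ to be equal and opposite, and the surviving contribution from $E_5'$ to make $\langle H,N_0\rangle$ a nonzero constant multiple of $\lambda$. Since $H$ is normal and $\|N_0\|^2>0$, this gives $H=0$ if and only if $\lambda=0$, i.e. exactly for the orbit through $\naiseki_0$ (recovered at $g_0=\mathrm{id}$), which proves that $\A.\naiseki_0$ is the unique minimal orbit.

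Two comments on strategy. Unlike the case $\g=\mathfrak{r}_3$ treated in Lemma~\ref{action1}, the orbits here are \emph{not} mutually congruent---there is a distinguished minimal one---so I cannot reduce to a single orbit and must keep $\lambda$ as a free parameter throughout. The main obstacle I anticipate is the bookkeeping in (2): both $N_0$ and the fifth tangent vector $E_5'$ depend on $\lambda$, because the defining constraint couples the $(2,3)$-entry to the difference of the diagonal entries, so the commutator $[N_0,\hat E_5]$ and its symmetrization are the delicate part. The normalizing factor $1/\sqrt{2(1+\lambda^2)}$ appearing in $E_5'$ must cancel correctly against $\|N_0\|^2=2(1+\lambda^2)$ for the final expression to simplify; once this cancellation is checked, the nonvanishing of $H$ is seen to be driven entirely by the $\lambda$-coupled direction $E_5'$.
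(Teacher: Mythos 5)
Your proposal is correct and follows essentially the same route as the paper: reduce each orbit $\A.(g_\lambda.\naiseki_0)$ to the origin via the ambient isometry $g_\lambda^{-1}$, identify the tangent space of $(g_\lambda^{-1}\A g_\lambda).o$ from the conjugated derivation algebra, and compute the mean curvature against the one-dimensional normal direction using (\ref{sec}) and (\ref{mcv}); your predicted outcome (two commutator terms vanish, two cancel, the $\lambda$-coupled direction survives, giving $H$ proportional to $\lambda$) is exactly what the paper's computation yields, $H = -2\lambda/(5\sqrt{2(1+\lambda^2)})\,A'$. The only cosmetic difference is in part (1), where you compute $\mathrm{d}\pi_e(\mathfrak{u}')$ directly as a hyperplane of $\mathrm{sym}(3)$ while the paper counts $\dim\mathfrak{u}' - \dim(\mathfrak{u}'\cap\mathfrak{o}(3))$; these are equivalent.
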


\begin{proof}
Take any $\naiseki$. 
In order to prove (1), 
we show that $\A.\naiseki$ is a hypersurface, 
that is, has dimension $5$. 
From the expression of $\PM$, 
there exists $\lambda \in \mathbb{R}$ such that 
\begin{align}
\A.\naiseki = \A.(g_{\lambda}. \naiseki_0) . 
\end{align}
Let us define 
\begin{align}
U^{\prime} := g_{\lambda}^{-1} (\A) g_{\lambda} . 
\end{align}
Then, since $g_{\lambda}^{-1}$ gives an isometry, 
one has an isometric congruence 
\begin{align}\label{Orb2} 
\A.(g_{\lambda}. \naiseki_0) \cong U^{\prime}.\naiseki_0 . 
\end{align}
Let $\mathfrak{u}^{\prime}$ be the Lie algebra of $U^{\prime}$. 
From the expression of $\D$, one can directly calculate 
\begin{align}\label{LAO2}
\mathfrak{u}^{\prime} 
= g_{\lambda}^{-1} (\D) g_{\lambda} 
= \left\{ \left(
\begin{array}{ccc}
x_{11} & 0 & 0 \\
x_{21} & x_{22} & 0 \\
x_{31} & -\lambda (x_{22}-x_{33}) & x_{33} 
\end{array}
\right) \right\} .
\end{align}
Thus it is easy to check that 
\begin{align}
\dim \mathfrak{u}^{\prime} = 5 , \quad 
\dim (\mathfrak{u}^{\prime} \cap \mathfrak{o}(3)) = 0 . 
\end{align}
Therefore $U^{\prime}.\naiseki_0$ has dimension $5$, 
which completes the proof of (1). 

In order to prove (2), 
we have only to show that 
$U^{\prime}.\naiseki$ is minimal if and only if $\lambda = 0$. 
From (\ref{LAO2}), one can see that 
\begin{align}
\m^{\prime} & := \mathrm{d}\pi_e (\mathfrak{u}^{\prime})
= 
\left\{ 
\left(
\begin{array}{ccc}
x_{11} & x_{21} & x_{31} \\
x_{21} & x_{22} & (-\lambda /2) (x_{22}-x_{33}) \\
x_{31} & (-\lambda /2) (x_{22}-x_{33}) & x_{33} 
\end{array}
\right) 
\right\} . 
\end{align} 
We define a basis $\{ X_1, \ldots, X_5 \}$ of $\mathfrak{u}^{\prime}$ by 
\begin{align}
\begin{split}
&
X_1 := E_{11}, \quad 
X_2 := (1 / \sqrt{2}) (E_{22}+ E_{33}) , \\ 
&
X_3 := (1 / \sqrt{2(1+\lambda^2)}) \left( E_{22}-E_{33}-2\lambda E_{32} \right) , \\ 
& 
X_4 := \sqrt{2} E_{21} , \quad 
X_5 := \sqrt{2} E_{31} . 
\end{split}
\end{align}
Let us put 
\begin{align}
X_i^{\prime} := (X_i)_o^\ast = (1/2) (X_i + {}^t X_i) . 
\end{align} 
Then $\{ X_1^{\prime}, \ldots , X_5^{\prime} \}$ 
is an orthonormal basis of $\m^{\prime}$. 
Furthermore, define
\begin{align}
A &:= 
(1 / \sqrt{2(1+\lambda^2)}) \left( -\lambda E_{22}+\lambda E_{33}-2 E_{32} \right), 
\quad 
A^{\prime} := (A)_o^\ast . 
\end{align}  
Then 
$\{ A^{\prime} \}$ is an orthonormal basis of $\m \ominus \m^{\prime}$. 
Recall that the mean curvature vector $H$ is given by 
\begin{align}
H = -(1/5) 
\textstyle \sum h(X_i^\prime , X_i^\prime) , \quad 
\langle h(X_i^\prime , X_i^\prime) , A^{\prime} \rangle 
= \langle [A , X_i]_o^\ast , (X_i)_o^\ast \rangle . 
\end{align}
The bracket products $[A , X_i]$ satisfy 
\begin{align}
\begin{split}
&
[A , X_1] = [A , X_2] = 0, \quad [A , X_3] = -2 E_{32} , \\ 
&
[A , X_4] = - (1 / \sqrt{1 + \lambda^2}) (\lambda E_{21} + 2 E_{31})  , \quad 
[A , X_5] = (\lambda / \sqrt{1+\lambda^2}) E_{31}. 
\end{split}
\end{align}
Hence, one has 
\begin{align}
\begin{split}
\langle [A , X_3]_o^\ast , (X_3)_o^\ast \rangle 
&  = 2 \lambda / \sqrt{2 (1+\lambda^2)} , \\ 
\langle [A , X_4]_o^\ast , (X_4)_o^\ast \rangle  
& = - \lambda / \sqrt{2 (1+\lambda^2)} , \\ 
\langle [A , X_5]_o^\ast , (X_5)_o^\ast \rangle  
& = \lambda / \sqrt{2 (1+\lambda^2)} . 
\end{split}
\end{align}
This yields that 
\begin{align}
H = - 2 \lambda / (5 \sqrt{2 (1+\lambda^2)}) A^{\prime} . 
\end{align}
Therefore, $H=0$ if and only if $\lambda = 0$. 
This completes the proof of (2). 
\end{proof}

\subsection{Case of $\g=\mathfrak{r}^{\prime}_{3,a}$ ($a \geq 0$)}

In this subsection, 
we study the case of $\g=\mathfrak{r}^{\prime}_{3,a}$. 
Throughout this subsection, we fix $a$ satisfying $a \geq 0$. 
Recall that, from Lemma~\ref{der}, $\D$ is given by
\begin{align}\label{de3}
\displaystyle \D = \left\{ \left(
\begin{array}{ccc}
x_{11} & 0 & 0 \\
x_{21} & x_{22} & -x_{23} \\
x_{31} & x_{23} & x_{22} 
\end{array} 
\right) \right\} . 
\end{align} 
The expression of $\PM$ is given in Proposition~\ref{PM3} as follows: 
\begin{align} 
\displaystyle \PM = \left\{ [ g_{\lambda}.\naiseki _0 ] \mid 
g_{\lambda} = \left( 
\begin{array}{ccc}
1 & 0 & 0 \\
0 & 1 & 0 \\
0 & 0 & 1/\lambda
\end{array} 
\right) , \ \lambda \geq 1 \right\} . 
\end{align} 

\begin{prop}\label{A^03} 
Let $\g=\mathfrak{r}^{\prime}_{3, a}$. 
Then, we have
\begin{enumerate}
\item 
The action of $\A$ is of cohomogeneity one, 
and $\A.\naiseki_0$ is the unique singular orbit. 
\item 
$\A.\naiseki_0$ is the unique minimal orbit. 
\end{enumerate}
\end{prop}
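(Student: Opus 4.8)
The plan is to transport each orbit to the base point $\naiseki_0$ by an isometry, exactly as in the proof of Proposition~\ref{Pr2}. For $\lambda \geq 1$, Proposition~\ref{PM3} gives $\A.\naiseki = \A.(g_\lambda.\naiseki_0)$, and since $g_\lambda^{-1}$ acts as an isometry of $\M$, this orbit is isometrically congruent to $U'.\naiseki_0$, where $U' := g_\lambda^{-1}(\A)g_\lambda$ has Lie algebra $\mathfrak{u}' = g_\lambda^{-1}(\D)g_\lambda$. First I would compute this conjugate from (\ref{de3}); a direct calculation shows $\mathfrak{u}'$ is the $5$-dimensional space obtained from $\D$ by replacing the $(2,3)$- and $(3,2)$-entries $(-x_{23},x_{23})$ by $(-x_{23}/\lambda,\lambda x_{23})$ (the first-column entry being rescaled but still free). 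Intersecting with $\mathfrak{o}(3)$, the diagonal and first-column conditions kill every parameter except $x_{23}$, and the skew condition on the $(2,3)$/$(3,2)$ block becomes $x_{23}(\lambda^2-1)=0$. Hence $\dim(\mathfrak{u}'\cap\mathfrak{o}(3))$ equals $0$ for $\lambda>1$ and $1$ for $\lambda=1$, so the orbit has dimension $5$ when $\lambda>1$ and $4$ when $\lambda=1$. As $\dim\M=6$, the action is of cohomogeneity one and $\A.\naiseki_0$ (the case $\lambda=1$) is the unique singular orbit, which proves~(1).

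For (2) I would treat the singular orbit by a symmetry argument rather than by computation. The isotropy subgroup of $\A$ at $\naiseki_0$ contains the rotations
\begin{align*}
R(\theta) = \left(\begin{array}{ccc} 1 & 0 & 0 \\ 0 & \cos\theta & -\sin\theta \\ 0 & \sin\theta & \cos\theta \end{array}\right) ,
\end{align*}
which lie in $(\A)^0$ by (\ref{de3}) and fix $\naiseki_0$ since $R(\theta)\in\mathrm{O}(3)$. The linear isotropy action on $\mathrm{T}_{\naiseki_0}\M=\mathrm{sym}(3)$ is $X\mapsto R(\theta)X R(\theta)^{-1}$, which preserves the $2$-dimensional normal space $\m\ominus\m'$ spanned by $E_{22}-E_{33}$ and $E_{23}+E_{32}$ and acts there as rotation through $2\theta$. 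Thus no nonzero normal vector is fixed, whereas the mean curvature vector $H$ of $\A.\naiseki_0$ is fixed by the isotropy; hence $H=0$, and the singular orbit is minimal.

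For the regular orbits $\lambda>1$ I would instead compute $H$ directly, as in Proposition~\ref{Pr2}. Here $\m\ominus\m'$ is spanned by the single unit vector $A:=(1/\sqrt2)(E_{22}-E_{33})$. I would fix an orthonormal basis $\{X_1',\dots,X_5'\}$ of $\m'$ with $X_1=E_{11}$, $X_2=(1/\sqrt2)(E_{22}+E_{33})$, $X_4=\sqrt2\,E_{21}$, $X_5=\sqrt2\,E_{31}$, and, in the $(2,3)$/$(3,2)$ direction, $X_3 = c\,\bigl(-\frac{1}{\lambda}E_{23}+\lambda E_{32}\bigr)$ with $c\propto 1/(\lambda-1/\lambda)$; the delicate point is precisely this normalization, which blows up as $\lambda\to1$ and records the drop in orbit dimension there. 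Using (\ref{sec}) and (\ref{mcv}) with the brackets $[A,X_i]$, the two first-column contributions cancel and one is left with
\begin{align*}
\langle H, A\rangle = \frac{\sqrt2\,(\lambda+1/\lambda)}{5\,(\lambda-1/\lambda)} \neq 0 \qquad (\lambda>1) .
\end{align*}
Therefore no regular orbit is minimal, and together with the previous paragraph this shows that $\A.\naiseki_0$ is the unique minimal orbit.

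The main obstacle is this last computation: setting up the $\lambda$-dependent orthonormal frame correctly in the $(2,3)$/$(3,2)$ direction and controlling its degeneration at $\lambda=1$. Once the frame is chosen, the bracket calculation is routine and the cancellation of the first-column terms makes the final expression transparent; the symmetry argument then disposes of the singular orbit $\lambda=1$ without having to confront this degeneracy.
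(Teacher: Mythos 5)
Your proposal is correct, and parts of it coincide with the paper's own proof: for (1) you perform the same conjugation $\mathfrak{u}' = g_{\lambda}^{-1}(\D)g_{\lambda}$ and the same count $\dim(\mathfrak{u}'\cap\mathfrak{o}(3)) = 0$ or $1$ according to $\lambda>1$ or $\lambda=1$; and for the regular orbits your frame is literally the paper's (your $X_3 = c\bigl(-\tfrac{1}{\lambda}E_{23}+\lambda E_{32}\bigr)$ with $c = \sqrt{2}\lambda/(\lambda^2-1)$ equals the paper's $X_5 = (\sqrt{2}/(1-\lambda^2))(E_{23}-\lambda^2E_{32})$), and your value $\sqrt{2}(\lambda+1/\lambda)/(5(\lambda-1/\lambda))$ agrees with the paper's $\sqrt{2}(1+\lambda^2)/(5(\lambda^2-1))$. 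The genuine difference is how the singular orbit is handled: the paper simply quotes Podest\`a's theorem that every singular orbit of a cohomogeneity one action is minimal, whereas you prove minimality directly, noting that the isotropy subgroup of $\A$ at $\naiseki_0$ contains the rotations $R(\theta)$, that these act on the two-dimensional normal space $\mathrm{span}\{E_{22}-E_{33},\,E_{23}+E_{32}\}$ by rotation through $2\theta$ (so no nonzero fixed normal vector exists), and that the mean curvature vector is necessarily isotropy-fixed. Your argument is sound --- it is essentially the slice-representation mechanism behind the general theorem, specialized to this action --- and it buys self-containedness: no external reference is needed, and the degeneration of the $\lambda$-dependent frame at $\lambda=1$ is never confronted. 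What the paper's route buys is brevity, plus the fact that the cited result is stated for arbitrary cohomogeneity one actions, so no case-specific symmetry needs to be exhibited. Either way the logical structure (singular orbit minimal by symmetry or citation; regular orbits non-minimal by the explicit $H$ computation) is the same, so your proof is a valid substitute for the paper's.
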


\begin{proof}
Take any $\naiseki$. 
In order to prove (1), 
we calculate the dimensions of the orbits. 
From the expression of $\PM$, there exists $\lambda  \geq 1$ such that 
\begin{align}
\A.\naiseki = \A.(g_{\lambda}.\naiseki_0) . 
\end{align} 
Let us denote by 
\begin{align}
U^{\prime} := g_{\lambda}^{-1} (\A) g_{\lambda} . 
\end{align} 
Then, since $g_{\lambda}^{-1}$ gives an isometry, one has an isometric congruence 
\begin{align}\label{Orb3} 
\A.(g_{\lambda}. \naiseki_0) \cong U^{\prime}.\naiseki_0 . 
\end{align}
Let $\mathfrak{u}^{\prime}$ be the Lie algebra of $U^{\prime}$. 
From the expression of $\D$, 
a direct calculation yields that 
\begin{align}\label{LAO3}
\mathfrak{u}^{\prime} 
& = g_{\lambda}^{-1} (\D) g_{\lambda} 
= \left\{ \left(
\begin{array}{ccc}
x_{11} & 0 & 0 \\
x_{21} & x_{22} & x_{23} \\
x_{31} & -\lambda^2 x_{23} & x_{33} 
\end{array}
\right) \right\} .
\end{align}
Then we have
\begin{align}
\dim \mathfrak{u}^{\prime} = 5,  \quad 
\dim (\mathfrak{u}^{\prime} \cap \mathfrak{o}(3)) = 
\begin{cases}
0 & \text{(for $\lambda > 1$)} , \\
1 & \text{(for $\lambda = 1$)} . 
\end{cases}
\end{align}
This yields that the orbit corresponding to $\lambda = 1$ 
is the unique singular orbit, 
(which has codimension two). 
This completes the proof of (1). 

We show (2). 
It is known that every singular orbit of a cohomogeneity one action is minimal 
(see \cite{Podesta}). 
Then we have only to show that $U^{\prime}.\naiseki$ is not minimal 
if $\lambda > 1$. 
From now on assume that $\lambda > 1$. 
From (\ref{LAO3}), one can see that 
\begin{align}\label{m3}
\m^{\prime} & := \mathrm{d}\pi_e (\mathfrak{u}^{\prime})
= \left\{ \left(
\begin{array}{ccc}
x_{11} & x_{21} & x_{31} \\
x_{21} & x_{22} & ((1-\lambda^2)/2) x_{23} \\
x_{31} & ((1-\lambda^2)/2) x_{23} & x_{33} 
\end{array}
\right) \right\} . 
\end{align} 
We define a basis $\{ X_1, \ldots, X_5 \}$ of $\mathfrak{u}^{\prime}$ by 
\begin{align}
\begin{split}
&
X_1 := E_{11}, \quad 
X_2 := (1/\sqrt{2}) (E_{22}+E_{33}) , \quad 
X_3 := \sqrt{2} E_{21} , \\
&
X_4 := \sqrt{2} E_{31} , \quad 
X_5 := (\sqrt{2}/(1-\lambda^2))  (E_{23}- \lambda^2 E_{32}) . 
\end{split}
\end{align}
Furthermore we put 
\begin{align}
X_i^{\prime} := (X_i)_o^\ast = (1/2) (X_i + {}^t X_i) , \quad 
A := (1/\sqrt{2}) (E_{22}-E_{33}). 
\end{align} 
Then $\{ X_1^{\prime}, \ldots , X_5^{\prime} \}$ is an orthonormal basis of $\m^{\prime}$, 
and $\{ A \}$ is an orthonormal basis of $\m \ominus \m^{\prime}$. 
Recall that the mean curvature vector $H$ is given by 
\begin{align}
H = -(1/5) 
\textstyle \sum h(X_i^\prime , X_i^\prime) , \quad 
\langle h(X_i^\prime , X_i^\prime) , A \rangle 
= \langle [A , X_i]_o^\ast , (X_i)_o^\ast \rangle . 
\end{align}
The bracket products $[A , X_i]$ satisfy 
\begin{align}
\begin{split}
&
[A , X_1] = [A , X_2] = 0 , \quad 
[A , X_3] = E_{21} , \quad 
[A , X_4] = - E_{31} , \\
& 
[A , X_5] = (2/(1-\lambda^2))(E_{23}+\lambda^2 E_{32}) . 
\end{split}
\end{align}
Hence, one has 
\begin{align}
\begin{split}
\langle [A , X_3]_o^\ast , (X_3)_o^\ast \rangle 
& 
= 1 / \sqrt{2} , \\ 
\langle [A , X_4]_o^\ast , (X_4)_o^\ast \rangle 
& 
= - 1 / \sqrt{2} , \\ 
\langle [A , X_5]_o^\ast , (X_5)_o^\ast \rangle 
& 
= \sqrt{2}(1+\lambda^2 )/(1- \lambda^2 ) . 
\end{split}
\end{align}
This yields that 
\begin{align}
H = - \sqrt{2}(1+\lambda^2 ) / (5 (1- \lambda^2)) A \neq 0 . 
\end{align}
which completes the proof. 
\end{proof}

\section*{Acknowledgement}
The authors would like to thank Yoshio Agaoka and Kazuhiro Shibuya for useful comments and suggestions. 
The first author was supported in part by Grant-in-Aid for Japan Society for the Promotion of Science Fellows (11J05284). 
The second author was supported in part by 
%KAKENHI (20740040 and 24654012).  
JSPS KAKENHI Grant Numbers 24654012, 26287012.

\end{document}